\documentclass[reqno,11pt]{article}

\usepackage{geometry}
\geometry{left=2.5cm,right=2.5cm,top=2.5cm,bottom=2.5cm}
\usepackage{indentfirst}
\usepackage{amsthm}
\usepackage[numbers,comma,sort&compress,numbers]{natbib}
\bibliographystyle{plain}
\usepackage{amsmath,amsfonts,amssymb,color}
\numberwithin{equation}{section}
\usepackage{enumerate}
\usepackage{fixltx2e}
\usepackage{mathtools}
\usepackage[T1]{fontenc}
\usepackage[utf8]{inputenc}
\usepackage{authblk}
\allowdisplaybreaks[4]




\theoremstyle{plain}
\newtheorem{theorem}{Theorem}[section]

\newtheorem{lemma}[theorem]{Lemma}
\newtheorem{proposition}[theorem]{Proposition}

\theoremstyle{definition}
\newtheorem{definition}[theorem]{Definition}

\newtheorem{remark}[theorem]{Remark}

\title{\textbf{Normalized solutions for Schr\"{o}dinger-Bopp-Podolsky system}}

\author{, Lin Li and }
\author{Chuan-Min He, \ Lin Li\footnote{{\tt Corresponding author.}
		E-mail address: {\tt linli@ctbu.edu.cn \& lilin420@gmail.com} (L. Li).} \ and Shang-Jie Chen\footnote{This work is supported by Research Fund of National Natural Science Foundation of China (No. 11861046), National Natural Science Foundation of Chongqing(No. 2019jcyj-msxmX0115), Chongqing Municipal Education Commission (No. KJQN20190081), Chongqing Technology and Business University(No. CTBUZDPTTD201909).}\\
	\footnotesize
	School of Mathematics and Statistics \& Chongqing Key Laboratory of Economic and Social Application Statistics,\\ \footnotesize Chongqing Technology and Business University,\\
	\footnotesize Chongqing 400067, China}

\date{}

\linespread{1.2}

\begin{document}
\maketitle

\begin{abstract}
\noindent In this paper, we study the following energy functional originates from the Schr\"{o}dinger-Bopp-Podolsky system
$$I(u)=\frac{1}{2}\int_{\mathbb{R}^{3}}|\nabla u|^{2}dx+\frac{1}{4}\int_{\mathbb{R}^{3}}
\phi_{u}u^{2}dx-\frac{1}{p}\int_{\mathbb{R}^{3}}|u|^{p}dx$$
constrained on $B_{\rho}=\left\{u\in H^{1}(\mathbb{R}^{3},C):\ \left\|u\right\|_{2}=\rho\right\},$ where $\rho>0.$ As such constrained problem $I(u)$ is bounded from below on $B_{\rho}$ when $p\in(2,\frac{10}{3}).$ We use minimizing method to get a normalized solution.
\end{abstract}

\noindent {\bf Key words:}Normalized solutions, Schr\"{o}dinger-Bopp-Podolsky system, Constrained minimization

\noindent {\bf 2010 Mathematics Subject Classification:}35Q55, 35J30, 35J50

\section{Introduction}
In this paper, we consider the following Schr\"{o}dinger-Bopp-Podolsky system:
\begin{eqnarray}\label{P1}
	\left\{   \begin{array}{ll}
	-\Delta u + \omega u +\phi u=|u|^{p-2}u   \ & \text{in} \  \mathbb{R}^{3}, \\
	-\Delta \phi + a^{2}\Delta^{2}\phi  = 4\pi u^{2}   \ & \text{in}\  \mathbb{R}^{3}.\\
\end{array}
\right.	
\end{eqnarray}
where $u,\phi:\mathbb{R}^{3}\to\mathbb{R}$, $\omega>0$ and let $a=1$. Such a system was first proposed by d'Avenia and Siciliano\cite{MR3957980} and can be used to describe solitary waves for nonlinear stationary equations of Schr\"{o}dinger type interacting with an electrostatic field in the Bopp-Podolsky electromagnetic theory. The Bopp-Podolsky theory, developed by Bopp\cite{MR0004193} and Podolsky\cite{MR6723} independently, can be interpreted as an effective theory for short distances and for large distance it is experimentally indistinguishable from Maxwell one. For more physical aspects of this system, please refer to \cite{born1933modified,MR26955,born1934foundations} and the references therein. Next we focus on the mathematical aspects of the problem.

In recent years, many people consider this system through using variational methods. Next we recall some previous results. At first, d'Avenia and Siciliano\cite{MR3957980} have been devoted to the following autonomous system with subcritical growth:
\begin{eqnarray}\label{P 2}
	\left\{   \begin{array}{ll}
	-\Delta u + \omega u +q^{2}\phi u=|u|^{p-2}u   \ & \text{in} \  \mathbb{R}^{3}, \\
	-\Delta \phi + a^{2}\Delta^{2}\phi  = 4\pi u^{2}   \ & \text{in}\  \mathbb{R}^{3}.\\
\end{array}
\right.	
\end{eqnarray}
where $\omega,a>0,q\neq 0$ and $p>2$. They obtain that problem \eqref{P 2} has a nontrivial solution when $p\in(3,6)$ and $q>0$ or $p\in(2,3]$ and $q>0$ small enough. In the radial case, they get the solutions tend to solutions of classical Schr\"{o}dinger-Poisson system as $a\to 0.$ Furthermore, they prove that problem \eqref{P 2} does not have a nontrivial solution when $p\ge 6$ through using a Poho\v{z}aev identity. Siciliano and Silva\cite{MR4119258}, by means of the fibering approach, prove the system \eqref{P 2} has no solutions at all for large enough of $q$ and has two radical solutions for small enough of $q$ when $p\in(2,3]$, which improve some results in \cite{MR3957980}. Furthermore, under the assumption that $\omega$ is replaced by a coercive potential $V(x)$ and $|u|^{p-2}u$ is replaced with $f(u)$ in \cite{MR4217993}, they proved that problem \eqref{P 2} has a ground state when $f(u)=|u|^{p-2}u+h(x)$ with $p\in[4,6)$ and at least one positive solution for $f(u)=P(x)u^{5}+\mu |u|^{p-2}u$ with $p\in(2,6).$ They also proved that the problem \eqref{P 2} possesses infinitely many nontrivial solutions when $f(u)$ satisfies the following conditions:
\begin{itemize}
  \item[$(f_1)$] $f(t)=-f(-t).$
  \item[$(f_2)$] There exist $t\in(1,5)$ such that $\lim_{t\to 0}\frac{f(t)}{t}=\lim_{t\to +\infty}\frac{f(t)}{|t|^{t}}=0.$
  \item[$(f_3)$] $\lim_{|t|\to +\infty}\frac{F(t)}{|t|^{4}}=\infty,$ and there exists $\mu\ge 4,\ \kappa>0$ such that $\mu F(t)\le tf(t)+\kappa t^{2},$ where $F(t)=\int_{0}^{t}f(r)dr.$
\end{itemize}
Afterwards, Chen and Tang\cite{MR4050783} study the following critical Schr\"{o}dinger-Bopp-Podolsky with subcritical perturbations of general function:
\begin{eqnarray}\label{P 3}
	\left\{   \begin{array}{ll}
	-\Delta u + V(x)u +\phi u=\mu f(u)+u^{5}   \ & \text{in} \  \mathbb{R}^{3}, \\
	-\Delta \phi + a^{2}\Delta^{2}\phi  = 4\pi u^{2}   \ & \text{in}\  \mathbb{R}^{3},\\
\end{array}
\right.	
\end{eqnarray}
get a nontrivial solution and ground state solution. Yang, Chen and Liu\cite{MR4143620} study the existence of nontrivial solution when $f(u)$ in \eqref{P 3} without any growth and Ambrosetti-Rabinowitz conditions. Later, Li, Pucci and Tang\cite{MR4129340}, by using the method of the Poho\v{z}aev-Nehari manifold, get a nontrivial ground state solution for \eqref{P 3} when $f(u)=|u|^{p-1}u$.

Observe all the above articles, it is not difficult to find that they all regard $\omega$ as a fixed frequency parameter to seek nontrivial solutions. Hence, nothing can be given a priori on the $L^{2}$-norm of the solutions. However, many physicists are very interested in normalized solutions, i.e. solutions with prescribed $L^{2}$-norm. To my best knowledge, for solving the normalized solution of the Schr\"{o}dinger-Bopp-Podolsky system, only paper \cite{afonso2021normalized} consider it under Neumann boundary conditions. In the present paper, we study whether \eqref{P1} has normalized solutions without Neumann boundary conditions. A normalized solution of \eqref{P1} can be obtained as a constrained critical point of the functional
\begin{align}\label{A13}
I(u)=\frac{1}{2}\int_{\mathbb{R}^{3}}|\nabla u|^{2}dx+\frac{1}{4}\int_{\mathbb{R}^{3}}
\phi_{u}u^{2}dx-\frac{1}{p}\int_{\mathbb{R}^{3}}|u|^{p}dx
\end{align}
on the $L^{2}-$sphere in $H^{1}(\mathbb{R}^{3},C)$
$$B_{\rho}=\left\{u\in H^{1}(\mathbb{R}^{3},C):\ \left\|u\right\|_{2}=\rho\right\},$$
where $\phi_{u}$ is defined in Section 2. It should be noted that in this case the frequency $\omega$ is no longer be imposed but instead appears as a Lagrange parameter.

Consider the following minimization problem
$$I_{\rho^{2}}=\inf_{B_{\rho}}I(u),$$
which makes sense for $p\in(2,\frac{10}{3}).$ Moreover, in this case, we know that the $C^{1}$ functional $I$ is bounded from below and coercive on $B_{\rho}$. The detailed proof procedure can be seen in Lemma \ref{L 1}. Hence, the most dramatic difficult problem is the minimizing sequences $\left\{u_{n}\right\}\subset B_{\rho}$ the lack of compactness. In fact, there will be two bad possibilities, namely
\begin{enumerate}[(i)]
		\item $u_{n}\rightharpoonup 0$.
		\item $u_{n}\rightharpoonup\hat{u}\neq 0$ and $0<\left\|\hat{u}\right\|_{2}<\rho.$
	\end{enumerate}
We can rule out both cases using the general method. i.e. first prove any minimizing sequence weakly converges, up to translation, to a function $\hat{u}$ which is different from zero, excluding the vanishing case and then we have to show $\left\|\hat{u}\right\|_{2}=\rho.$ In order to prove $\left\|\hat{u}\right\|_{2}=\rho,$ we need to know that function $I$ satisfies the strong subadditivity inequality, namely
\begin{align}\label{A12}
I_{\rho^{2}}<I_{\mu^{2}}+I_{\rho^{2}-\mu^{2}}\ \ \ \ \ for\ all\ 0<\mu<\rho.
\end{align}
When $p\in(3,\frac{10}{3})$, we adapt a similar argument as in \cite{MR2786153}. By standard scaling arguments and some basic properties of functions we can get \eqref{A12}. However, when $p\in(2,3)$, it will be difficult to prove \eqref{A12} using the above method. Therefore, we use the method of \cite{MR2826402}. We must show that

$(\mathbf{HD})$ the function $s\mapsto\frac{I_{s^{2}}}{s^{2}}$ is monotone decreasing.

In fact, if $(\mathbf{HD})$ hold when $\mu\in(0,\rho),$ from direct calculation we obtain
$$\frac{\mu^{2}}{\rho^{2}}I_{\rho^{2}}<I_{\mu^{2}}\ \ \ \ \ \frac{\rho^{2}-\mu^{2}}{\rho^{2}}<I_{\rho^{2}-\mu^{2}}.$$
Therefore,
$$I_{\rho^{2}}=\frac{\mu^{2}}{\rho^{2}}I_{\rho^{2}}+\frac{\rho^{2}-\mu^{2}}{\rho^{2}}
I_{\rho^{2}}<I_{\mu^{2}}+I_{\rho^{2}-\mu^{2}}.$$
Proving that satisfying $(\mathbf{HD})$ is not an easy task. The function $s\mapsto\frac{I_{s^{2}}}{s^{}2}$ have a fast oscillating behavior, even in a neighborhood of the origin. We use a large class of functionals provided that they satisfy some good scaling properties and give sufficient condition to guarantee $(\mathbf{HD})$ hold.

Our results are as follows.
\begin{theorem}\label{T1}
	 Let $p\in (3,\frac{10}{3})$ and $\rho>0$, there exists $\rho_{1}>0$ such that for any $\rho\in(\rho_{1},+\infty)$ there exists a couple $(u_{\rho},\lambda_{\rho})\in H^{1}(\mathbb{R}^{3})\times\mathbb{R}^{+}$ solution of \eqref{P1} with $\left\|u_{\rho}\right\|_{2}=\rho.$
\end{theorem}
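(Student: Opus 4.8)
The plan is to realize a minimizer of $I_{\rho^{2}}=\inf_{B_{\rho}}I(u)$ and identify the associated Lagrange multiplier as the frequency $\lambda_{\rho}=\omega$. Since Lemma~\ref{L 1} guarantees that $I$ is coercive and bounded from below on $B_{\rho}$ for $p\in(2,\tfrac{10}{3})$, I would first take a minimizing sequence $\{u_{n}\}\subset B_{\rho}$ for $I_{\rho^{2}}$. Coercivity forces $\{u_{n}\}$ to be bounded in $H^{1}(\mathbb{R}^{3})$, so up to a subsequence $u_{n}\rightharpoonup\hat{u}$ weakly in $H^{1}$ and strongly in $L^{q}_{loc}$ for $q\in[2,6)$. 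The whole difficulty, as the introduction signals, is the loss of compactness along the $L^{2}$-sphere, so the next task is to rule out both vanishing ($\hat u\equiv 0$) and dichotomy ($0<\|\hat u\|_{2}<\rho$).

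To defeat vanishing I would argue that $I_{\rho^{2}}<0$ when $\rho$ is large. The point of the hypothesis $\rho\in(\rho_{1},+\infty)$ is precisely that, for $p\in(3,\tfrac{10}{3})$, a suitable scaling $u\mapsto t^{3/2}u(t\,\cdot\,)$ (which preserves the $L^{2}$-norm) makes the gradient and nonlocal terms scale like positive powers of $t$ while the $L^{p}$ term dominates, so that testing the infimum with a fixed profile of mass $\rho$ gives a strictly negative value once $\rho>\rho_{1}$. A strictly negative infimum is incompatible with vanishing, because if $u_{n}\rightharpoonup 0$ then Lions' lemma yields $\|u_{n}\|_{p}\to 0$, whence $\liminf I(u_{n})\ge 0$, a contradiction. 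Up to translations I therefore obtain a nontrivial weak limit $\hat u\neq 0$.

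To upgrade $\hat u$ to a genuine minimizer with $\|\hat u\|_{2}=\rho$, I would invoke the strong subadditivity inequality \eqref{A12}. For $p\in(3,\tfrac{10}{3})$ this is exactly the regime the authors say can be handled by the scaling argument of \cite{MR2786153}: writing $\mu=\|\hat u\|_{2}$ and using the scaling relation $I_{(\theta\rho)^{2}}\le \theta^{\alpha} I_{\rho^{2}}$ (with $\alpha>2$ coming from the superquadratic behaviour of the $L^{p}$ term for $p>3$) one verifies $I_{\rho^{2}}<I_{\mu^{2}}+I_{\rho^{2}-\mu^{2}}$ for all $0<\mu<\rho$. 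If the weak limit had mass $\mu\in(0,\rho)$, then by weak lower semicontinuity of each term (the $L^{2}$ and $L^{p}$ contributions via Brezis--Lieb, the nonlocal term via its own splitting) one would get $I_{\rho^{2}}\ge I_{\mu^{2}}+I_{\rho^{2}-\mu^{2}}$, directly contradicting \eqref{A12}. This kills dichotomy and forces $\|\hat u\|_{2}=\rho$; combined with lower semicontinuity this makes $\hat u=:u_{\rho}$ an actual minimizer.

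Finally I would produce the couple $(u_{\rho},\lambda_{\rho})$. Since $B_{\rho}$ is a smooth constraint and $u_{\rho}$ is a constrained minimizer of the $C^{1}$ functional $I$, the Lagrange multiplier rule gives $I'(u_{\rho})=\lambda_{\rho}u_{\rho}$ for some $\lambda_{\rho}\in\mathbb{R}$, which is precisely the first equation of \eqref{P1} with $\omega=\lambda_{\rho}$ and $\phi=\phi_{u_{\rho}}$ solving the second equation by the definition of $\phi_{u}$ in Section~2. The remaining point is the sign $\lambda_{\rho}>0$: testing $I'(u_{\rho})=\lambda_{\rho}u_{\rho}$ against $u_{\rho}$ and comparing with the Pohozaev-type scaling identity satisfied by a minimizer (differentiating $t\mapsto I(t^{3/2}u_{\rho}(t\,\cdot\,))$ at $t=1$) yields an expression for $\lambda_{\rho}\rho^{2}$ that is strictly positive in the range $p\in(3,\tfrac{10}{3})$. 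I expect the main obstacle to be the compactness step, namely establishing strong subadditivity \eqref{A12} cleanly for the nonlocal Bopp--Podolsky term and then propagating it through the Brezis--Lieb splitting; controlling the cross terms of $\int\phi_{u}u^{2}$ under the decomposition $u_{n}=\hat u+(u_{n}-\hat u)+o(1)$ is the delicate book-keeping that makes or breaks the argument.
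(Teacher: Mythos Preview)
Your overall strategy matches the paper's: take a bounded minimizing sequence, kill vanishing via Lemma~\ref{L4} once $I_{\rho^2}<0$, kill dichotomy via the strict subadditivity inequality~\eqref{A12}, and then read off the Lagrange multiplier. The splitting properties you anticipate (Brezis--Lieb for the $L^p$ term and a nonlocal analogue for $B$) are exactly Lemma~\ref{L3}, and the paper packages the compactness conclusion through the abstract Proposition~\ref{P4}.

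The gap is in your justification of subadditivity. You invoke a scaling relation $I_{(\theta\rho)^{2}}\le\theta^{\alpha}I_{\rho^{2}}$ with $\alpha>2$; this is what happens for Schr\"odinger--Poisson, but here the Bopp--Podolsky kernel $(1-e^{-|x-y|})/|x-y|$ is not homogeneous, so under any mass-changing dilation $u_\theta(x)=\theta^{1-3\beta/2}u(x/\theta^\beta)$ the term $B(u_\theta)$ is not a pure power of $\theta$ times $B(u)$, and no single exponent $\alpha$ exists. The paper's cure is to dominate the Bopp--Podolsky energy by the Coulomb one, $B(u_\theta)\le\theta^{4-\beta}H(u)$ with $H(u)=\iint|x-y|^{-1}u^2(x)u^2(y)\,dx\,dy$, take $\beta=-2$, and write
\[
I(u_\theta)\le\theta^{2}\bigl(I(u)+f(\theta,u)\bigr),\qquad f(\theta,u)=(\theta^{4}-1)A(u)+(\theta^{4p-8}-1)C(u)+\theta^{4}H(u)-B(u).
\]
Along a minimizing sequence with $I_{\rho^{2}}<0$ one has $f(1,u_n)=H(u_n)-B(u_n)>0$ but $f'(1,u_n)<0$ and $f''(\theta,u_n)<0$ (using $4p-8>4$ when $p>3$), so $f(\theta,u_n)<0$ only for $\theta$ beyond some threshold $\theta_0>1$. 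This yields $I_{\theta^{2}\rho^{2}}<\theta^{2}I_{\rho^{2}}$ for $\theta>\theta_0$, \emph{not} for all $\theta>1$, which is precisely why a lower bound $\rho_1$ on the mass is forced and why the paper then runs a three-case argument (positioning $\mu^2$ and $\rho^2-\mu^2$ relative to $\rho_1^2$) to deduce the full strict subadditivity. Your closing paragraph correctly flags the nonlocal term as the obstacle, but the mid-proof claim that one exponent $\alpha>2$ settles it is exactly what fails here.

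A smaller point: the $L^{2}$-preserving scaling $t^{3/2}u(t\cdot)$ you propose for proving $I_{\rho^{2}}<0$ does not change $\rho$, so it cannot by itself explain why negativity requires $\rho>\rho_1$. The paper instead uses the same mass-changing dilation with $\beta=-2$ (for which the $L^p$ exponent $4p-6$ exceeds $6$) to see that $I(u_\theta)<0$ once $\theta$, hence the mass, is large.
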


\begin{theorem}\label{T2}
	 Let $p\in (2,3)$ and $\rho>0$, there exists $\rho_{2}>0$ such that for any $\rho\in(0,\rho_{2})$ there exists a couple $(u_{\rho},\lambda_{\rho})\in H^{1}(\mathbb{R}^{3})\times\mathbb{R}^{+}$ solution of \eqref{P1} with $\left\|u_{\rho}\right\|_{2}=\rho.$
\end{theorem}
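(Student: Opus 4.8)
The plan is to obtain $u_\rho$ as a minimizer of $I$ on $B_\rho$ and to read off the couple $(u_\rho,\lambda_\rho)$ from the Lagrange multiplier rule. Since $I$ depends on $u$ only through $|\nabla u|$, $|u|$ and $\phi_u$ (the latter driven by $|u|^2$), the diamagnetic inequality $|\nabla|u||\le|\nabla u|$ gives $I(|u|)\le I(u)$ while preserving the constraint, so it suffices to minimize over real nonnegative functions. By Lemma \ref{L 1}, for $p\in(2,\frac{10}{3})$ the functional is coercive and bounded below on $B_\rho$; hence $I_{\rho^2}$ is finite and every minimizing sequence $\{u_n\}\subset B_\rho$ is bounded in $H^1$. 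After a translation I would extract a weak limit $u_n\rightharpoonup\hat u$ and then must exclude the two failures of compactness recorded in the introduction: vanishing ($\hat u=0$) and dichotomy ($0<\|\hat u\|_2<\rho$).

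The decisive tool is the strong subadditivity \eqref{A12}. For the present range $p\in(2,3)$ the naive dilation argument of \cite{MR2786153} is not available, so the heart of the proof is the monotonicity $(\mathbf{HD})$: once $s\mapsto I_{s^2}/s^2$ is strictly decreasing, the elementary computation displayed in the introduction yields \eqref{A12} at once. I would work with the mass-preserving dilation $u_\theta(x)=\theta^{3/2}u(\theta x)$, under which $\int|\nabla u_\theta|^2=\theta^2\int|\nabla u|^2$ and $\int|u_\theta|^p=\theta^{3(p-2)/2}\int|u|^p$ with $3(p-2)/2<2$, while the Bopp--Podolsky term $\int\phi_{u_\theta}u_\theta^2$ interpolates between a degree-one (Coulomb) behaviour as $\theta\to0$ and a bounded behaviour as $\theta\to\infty$ because its kernel $\mathcal K(x)=\frac{1-e^{-|x|}}{|x|}$ is not scale-homogeneous. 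Testing $I$ on $u_\theta$ with a fixed unit-mass profile and letting $\theta\to0$ already gives $I_{\rho^2}<0$ for $\rho$ small, since the factor $\rho^{p-2}$ multiplying the (dominant) $L^p$ term beats the $\rho^{2}$ weighting of the nonlocal term; this negativity is what will kill the vanishing alternative.

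I expect $(\mathbf{HD})$ to be the main obstacle, exactly for the two reasons flagged by the authors: the map $s\mapsto I_{s^2}/s^2$ may oscillate near the origin, so monotonicity is not a soft consequence of convexity, and the nonlocal term is not homogeneous, so the clean exponent bookkeeping available for Schr\"{o}dinger--Poisson must be replaced by two-sided estimates on $\int\phi_u u^2$ under dilation. Following the abstract scheme of \cite{MR2826402}, I would verify the sufficient scaling condition it isolates, controlling the non-homogeneous error of the nonlocal term by the $L^p$ contribution; it is precisely this domination that requires the mass to be small and produces the threshold $\rho_2$, so that $(\mathbf{HD})$ --- and hence \eqref{A12} --- holds for all $\rho\in(0,\rho_2)$.

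With \eqref{A12} and $I_{\rho^2}<0$ in hand the compactness argument closes in the standard way: strict subadditivity rules out dichotomy via Lions' concentration--compactness lemma, while $I_{\rho^2}<0$ rules out vanishing (a vanishing sequence would satisfy $\int|u_n|^p\to0$, forcing $\liminf I(u_n)\ge0$). Hence $u_n\to\hat u$ strongly in $H^1$ with $\|\hat u\|_2=\rho$, so $u_\rho:=\hat u$ attains $I_{\rho^2}$. As a constrained minimizer, $u_\rho$ satisfies the Euler--Lagrange equation $-\Delta u_\rho+\lambda_\rho u_\rho+\phi_{u_\rho}u_\rho=|u_\rho|^{p-2}u_\rho$ for a Lagrange multiplier $\lambda_\rho\in\mathbb R$, so that $\lambda_\rho$ plays the role of the frequency $\omega$. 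I would finally confirm $\lambda_\rho>0$ by combining the Nehari relation (the equation tested against $u_\rho$), the energy identity $I(u_\rho)=I_{\rho^2}<0$ and the associated Poho\v{z}aev identity; this delivers the desired couple $(u_\rho,\lambda_\rho)\in H^1(\mathbb{R}^3)\times\mathbb{R}^+$ with $\|u_\rho\|_2=\rho$.
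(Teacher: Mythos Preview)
Your global strategy matches the paper's: minimize on $B_\rho$, establish the strict subadditivity \eqref{A12} through $(\mathbf{HD})$ via the abstract scheme of \cite{MR2826402}, and then conclude compactness (in the paper, Proposition~\ref{P4} combined with Lemmas~\ref{L3} and~\ref{L4}). Two concrete points, however, need correction.

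First, the mass-preserving dilation $u_\theta(x)=\theta^{3/2}u(\theta x)$ is the wrong test family for $(\mathbf{HD})$. The framework of \cite{MR2826402} (Lemma~\ref{P2} here) operates with scaling paths $g_u$ for which $\Theta_{g_u}(\theta)=\|g_u(\theta)\|_2^2/\|u\|_2^2$ satisfies $\Theta'_{g_u}(1)\neq 0$; the path must \emph{change} the $L^2$-norm. The paper uses the one-parameter family $g_u(\theta)=\theta^{1-\frac{3}{2}\beta}u(x/\theta^\beta)$, $\beta\in\mathbb{R}$, giving $\Theta(\theta)=\theta^2$, and the freedom in $\beta$ is essential below. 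Your mass-preserving choice has $\Theta\equiv 1$, hence $\Theta'(1)=0$, and cannot feed into the key condition $h'_{g_u}(1)\neq 0$. (Relatedly, testing a unit-mass profile under a mass-preserving dilation cannot by itself yield ``$I_{\rho^2}<0$ for $\rho$ small''; your sentence about $\rho^{p-2}$ versus $\rho^2$ tacitly inserts an additional amplitude scaling.)

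Second, the verification of $(\mathbf{HD})$ is not a soft domination estimate. Hypotheses (i)--(iv) of Lemma~\ref{P2} are routine; the real content is (v), and the paper proves it by contradiction: assume a sequence $u_n\in M(\rho)$ with $\rho_n=\|u_n\|_2\to 0$ satisfies $h'_{g_{u_n}}(1)=0$ for \emph{every} $\beta$. Since each $u_n$ is a constrained minimizer, it solves the equation and hence obeys the Poho\v{z}aev identity \eqref{A3}; combining this with the vanishing of $h'_{g_{u_n}}(1)$ for all $\beta$ and with the Nehari relation produces the rigid algebraic relations \eqref{A15}--\eqref{A7} among $A(u_n),B(u_n),C(u_n)$. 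These are then shown to be incompatible with $\rho_n\to 0$ via a five-case analysis splitting $(2,3)$ into $2<p<\tfrac{12}{5}$, $p=\tfrac{12}{5}$, $\tfrac{12}{5}<p<\tfrac{8}{3}$, $p=\tfrac{8}{3}$, $\tfrac{8}{3}<p<3$, each case using a different interpolation between $L^2$, $L^p$, $L^{12/5}$, $L^6$ together with $B(u)\le C\|u\|_{12/5}^4$. The smallness threshold $\rho_2$ arises from this contradiction argument, not from a direct exponent comparison; in particular, the Poho\v{z}aev identity enters already here, not only at the end for the sign of $\lambda_\rho$.
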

Finally, we prove the orbital stability of standing waves for Schr\"{o}dinger-Bopp-Podolsky system. It is well-known that the original approach of proving orbital stability is very complicated. Therefore, we draw on the method of \cite{MR677997} to get the following theorem.
\begin{theorem}\label{T3}
Let $p\in(2,\frac{10}{3}).$ Then the set
$$S_{\rho}=\left\{e^{i\theta}u(x):\theta\in[0,2\pi),\left\|u\right\|_{2}=\rho,I(u)=I_{\rho^{2}}\right\}$$
is orbitally stable.
\end{theorem}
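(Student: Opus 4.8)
The plan is to follow the classical strategy of Cazenave and Lions \cite{MR677997}, which reduces orbital stability to two ingredients: the compactness of minimizing sequences for $I_{\rho^2}$ and the conservation laws of the associated time-dependent flow. First I would isolate the variational compactness statement on which everything rests: whenever $\{u_n\}\subset H^1(\mathbb{R}^3,\mathbb{C})$ satisfies $\|u_n\|_2\to\rho$ and $I(u_n)\to I_{\rho^2}$, there exist a subsequence, translations $\{y_n\}\subset\mathbb{R}^3$, phases $\{\theta_n\}\subset[0,2\pi)$, and a minimizer $\hat u$ with $e^{i\theta_n}u_n(\cdot+y_n)\to\hat u$ strongly in $H^1$. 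This is exactly the compactness distilled in the proofs of Theorems \ref{T1} and \ref{T2}: one invokes the concentration-compactness principle, discards vanishing because the weak limit is nonzero, and discards dichotomy via the strict subadditivity \eqref{A12}. Since \eqref{A12} is secured throughout (by scaling when $p\in(3,\tfrac{10}{3})$ and through $(\mathbf{HD})$ when $p\in(2,3)$), this holds for all $p\in(2,\tfrac{10}{3})$; in particular, in the $\rho$-ranges of Theorems \ref{T1} and \ref{T2} the set of minimizers is nonempty and, being translation invariant, it makes $S_\rho$ closed under translations as well as under the phase action $u\mapsto e^{i\theta}u$.

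Next I would set up the Cauchy problem for the time-dependent system
\begin{equation}\label{CP}
i\partial_t\psi=-\Delta\psi+\phi_\psi\psi-|\psi|^{p-2}\psi,\qquad -\Delta\phi_\psi+\Delta^2\phi_\psi=4\pi|\psi|^2,
\end{equation}
whose standing waves $\psi(t,x)=e^{i\omega t}u(x)$, with the frequency $\omega$ arising as the Lagrange multiplier, are precisely the solutions of \eqref{P1}. Local well-posedness in $H^1(\mathbb{R}^3,\mathbb{C})$ is standard for this energy-subcritical nonlinearity, the coupling $u\mapsto\phi_u$ being bounded and smoothing as recorded in Section 2. Global existence then follows from the conservation of the mass $\|\psi(t)\|_2$ and the energy $I(\psi(t))$ together with Lemma \ref{L 1}: coercivity of $I$ on each $L^2$-sphere for $p\in(2,\tfrac{10}{3})$ yields a uniform-in-time bound $\|\psi(t)\|_{H^1}\le C$ that excludes finite-time blow-up.

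With these in hand the conclusion follows by contradiction. If $S_\rho$ were not orbitally stable, there would be $\varepsilon_0>0$, initial data $\psi_n^0$ with $\mathrm{dist}_{H^1}(\psi_n^0,S_\rho)\to0$, and times $t_n$ with $\mathrm{dist}_{H^1}(\psi_n(t_n),S_\rho)\ge\varepsilon_0$, where $\psi_n$ solves \eqref{CP} from $\psi_n^0$. Continuity of $\|\cdot\|_2$ and of $I$ forces $\|\psi_n^0\|_2\to\rho$ and $I(\psi_n^0)\to I_{\rho^2}$, and conservation propagates these to time $t_n$. Putting $w_n=(\rho/\|\psi_n(t_n)\|_2)\,\psi_n(t_n)\in B_\rho$, one gets $I(w_n)\to I_{\rho^2}$, so $\{w_n\}$ is a minimizing sequence. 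The compactness statement then supplies $\theta_n,y_n$ and a minimizer $\hat u$ with $e^{i\theta_n}w_n(\cdot+y_n)\to\hat u$ in $H^1$; applying the $H^1$-isometry $v\mapsto e^{-i\theta_n}v(\cdot-y_n)$ gives $\|w_n-e^{-i\theta_n}\hat u(\cdot-y_n)\|_{H^1}\to0$ with $e^{-i\theta_n}\hat u(\cdot-y_n)\in S_\rho$, so $\mathrm{dist}_{H^1}(w_n,S_\rho)\to0$. Since $\|w_n-\psi_n(t_n)\|_{H^1}\to0$, this contradicts $\mathrm{dist}_{H^1}(\psi_n(t_n),S_\rho)\ge\varepsilon_0$.

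The hard part will be the compactness of the first step rather than the bookkeeping of the third. The genuinely delicate issue is ruling out dichotomy for an \emph{arbitrary} minimizing sequence, which relies entirely on the strict inequality \eqref{A12}: the nonlocal term $\tfrac14\int_{\mathbb{R}^3}\phi_u u^2\,dx$ must be controlled under a splitting of the mass so that a dichotomized sequence sits strictly above $I_{\rho^2}$ in energy. A secondary difficulty is the global well-posedness of \eqref{CP}, where one must check that the Bopp-Podolsky operator $u\mapsto\phi_u$ does not disturb the standard contraction-mapping and conservation arguments; this is where the boundedness and smoothing estimates of Section 2 enter.
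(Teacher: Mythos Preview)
Your proposal is correct and follows essentially the same Cazenave--Lions contradiction argument as the paper: assume instability, extract initial data $\psi_{n,0}$ approaching $S_\rho$ and bad times $t_n$, use continuity and the conservation laws to make $\{\psi_n(t_n)\}$ a minimizing sequence for $I_{\rho^2}$, and then invoke the compactness of minimizing sequences (established in Theorems~\ref{T1} and~\ref{T2}) to contradict $\mathrm{dist}_{H^1}(\psi_n(t_n),S_\rho)\ge\varepsilon_0$. You are in fact more careful than the paper on two technical points---you spell out global well-posedness via coercivity (Lemma~\ref{L 1}) and insert the renormalization $w_n=(\rho/\|\psi_n(t_n)\|_2)\psi_n(t_n)$---whereas the paper simply assumes $\psi_{n,0}\in B_\rho$ and takes the flow and conservation laws for granted.
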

\begin{remark}
Notice that if $p\ge\frac{10}{3},$ by observing Lemma \ref{L 1}, we know the functional $I$ is unbounded from below on $B_{\rho}.$ Therefore, the minimizing method will no longer apply. At this point we know that if $a\to 0$ in problem \eqref{P1} the Schr\"{o}dinger-Bopp-Podolsky system becomes the classical Schr\"{o}dinger-Poisson system, namely
\begin{eqnarray}\label{P5}
	\left\{   \begin{array}{ll}
	-\Delta u + \omega u +\phi u=|u|^{p-2}u   \ & \text{in} \  \mathbb{R}^{3}, \\
	-\Delta \phi = 4\pi u^{2}   \ & \text{in}\  \mathbb{R}^{3}.\\
\end{array}
\right.	
\end{eqnarray}
Regarding the normalized solution of the mass sup-criticality of problem \eqref{P5}, the paper \cite{MR3092340} investigates the case $p\in(\frac{10}{3},6).$ They proved that problem \eqref{P5} has a mountain-pass geometry and similar to \cite{MR695536} develop a deformation argument to get a localization of Palais--Smale sequence. However, the mountain-pass geometry can not guarantee the existence of a bounded Palais--Smale sequence, so a set of constraints about Poho\v{z}aev equality was constructed to solve this problem. Moreover, concerning the lack of compactness for Palais--Smale sequence, it does not to reduce the problem to the classical vanishing-dichotomy-compactness scenario or like this paper method to check of the strong subadditivity inequalities. They studied some properties about the mountain-pass level function to overcome this difficulty. Afterwards, using the above geometric structure and adding a new constraint $\int_{\mathbb{R}^{3}}|\nabla u|^{2}dx<\frac{3}{5}\int_{\mathbb{R}^{3}}|u|^{\frac{10}{3}}dx,$ Ye \cite{MR3658970} studied the existence and the concentration behavior for problem \eqref{P5} when $p=\frac{10}{3}.$ In addition, those papers \cite{MR3803765,MR4008541,MR4085499,MR4318833} has also studied the normalized solution of problem \eqref{P5} under different suitable assumptions. Unfortunately, None of the methods in the above papers can be used in the Schr\"{o}dinger-Bopp-Podolsky system. Because if we do the scaling $u_{\lambda}(\cdot)=\lambda^{\alpha}u(\lambda^{\beta}(\cdot)),$ $\alpha,\beta\in\mathbb{R},$ $\lambda>0$ of the nonlocal term $\phi_{u}$, form \eqref{A16} we know it is impossible to get $\phi_{u_{\lambda(\cdot)}}=\lambda^{\xi}\phi_{u}$ where $\xi=\xi(\alpha,\beta).$ Moreover, $\int_{\mathbb{R}^{3}}\int_{\mathbb{R}^{3}}
e^{-|x-y|}u^{2}(x)u^{2}(y)dxdy$ in Poho\v{z}aev equality is also difficult to handle.
Therefore, we can not study the normalized solution for problem \eqref{P1} when $p\ge\frac{10}{3}.$
\end{remark}

\section{Preliminaries}
In this section we explain the notations and some auxiliary lemmas which are useful later. We cite the book \cite{MR3890060} for the standard reference book.

Let $H^{1}(\mathbb{R}^{3})$ denote the usual Sobolev space with respect to the norm $$\left\|u\right\|_{H^{1}(\mathbb{R}^{3})}^{2}=\int_{\mathbb{R}^{3}}(\left |\nabla u \right |^{2}+u^{2})dx.$$
$L^{p}(\mathbb{R}^{3})$ denote the usual Lebesgue space with respect to the norm $$\left\|u\right\|_{p}^{p}=\int_{\mathbb{R}^{3}}|u|^{p}dx.$$
$D^{1,2}(\mathbb{R}^{3})$ is the completion of $C_{0}^{\infty}$ with respect with
$$\left\|u\right\|_{D^{1,2}(\mathbb{R}^{3})}^{2}=\int_{\mathbb{R}^{3}}|\nabla u|^{2}dx.$$
The $\mathcal{D}$ is defined by the completion of $C_{0}^{\infty}(\mathbb{R}^{3})$ equipped with the scalar product
$$\langle u,v\rangle_{\mathcal{D}}:=\int_{\mathbb{R}^{3}}\nabla u\nabla vdx+\int_{\mathbb{R}^{3}}\Delta u\Delta vdx.$$
It is easy to know that $\mathcal{D}(\mathbb{R}^{3})$ is continuously embedded in $D^{1,2}(\mathbb{R}^{3})$ and consequently in $L^{\infty}(\mathbb{R}^{3}).$

$C_{1},\ C_{2},\ \cdot\cdot\cdot$ denote positive constant possibly different in different places.

For the sake of brevity, from now on we define the following quantities:
\begin{align*}
\begin{split}
&A(u):=\frac{1}{2}\int_{\mathbb{R}^{3}}|\nabla u|^{2}dx\ \ \ \ \ \ \ \ B(u):=\frac{1}{4}\int_{\mathbb{R}^{3}}\phi_{u}|u|^{2}dx\\
&C(u):=-\frac{1}{p}\int_{\mathbb{R}^{3}}|u|^{p}dx\ \ \ \ \ \ \ \ T(u):=B(u)+C(u)
\end{split}
\end{align*}
so that
$$I(u)=A(u)+T(u).$$
\begin{definition}
Let $u\in H^{1}(\mathbb{R}^{3}),$ $u\neq 0.$ A continuous path $g_{u}:\theta\in\mathbb{R}^{+}\mapsto g_{u}(\theta)\in H^{1}(\mathbb{R}^{3})$ such that $g_{u}(1)=u$ is said to be a scaling path of $u$ if
$\Theta_{g_{u}}(\theta):=\frac{\left\|g_{u}(\theta)\right\|_{2}^{2}}{\left\|u\right\|_{2}^{2}}$ is differentiable and $\Theta'_{g_{u}}(1)\neq 0.$ We denote with $G_{u}$ the set of the scaling paths of $u$.
\end{definition}
Moreover, we define a real valued function when $u\neq 0$, which will be useful in subsequent proofs
\begin{align*}
h_{g_{u}}(\theta):=I(g_{u}(\theta))-\Theta_{g_{u}}(\theta)I(u),\ \ \ \theta\ge 0.
\end{align*}
\begin{definition}
Let $u\neq 0$ be fixed and $g_{u}\in G_{u}.$ We say that the scaling path $g_{u}$ is admissible for the functional $I$ if $h_{g_{u}}$ is a differentiable function.
\end{definition}
\begin{proposition}(\cite{MR2786153})\label{P4}
Let $T$ be a $C^{1}$ functional on $H^{1}(\mathbb{R}^{3})$ and $\left\{u_{n}\right\}\subset B_{\rho}$ be a minimizing sequence for $I_{\rho}$ such that $u_{n}\rightharpoonup u\neq 0$; Assume
\begin{enumerate}[(i)]
    \item $\langle T'(u_{n}),u_{n}\rangle=O(1)$;
	\item $T(u_{n}-u)+T(u)=T(u_{n})+o(1)$;
    \item $T(\alpha_{n}(u_{n}-u))-T(u_{n}-u)=o(1),$ where $\alpha_{n}=\frac{\rho^{2}-\mu^{2}}{\left\|u_{n}-u\right\|_{2}}$;
    \item $\langle T'(u_{n})-T'(u_{m}),u_{n}-u_{m}\rangle=o(1);$
    \item $I_{\rho^{2}}< I_{\mu^{2}}+I_{\rho^{2}-\mu^{2}}$ for any $0<\mu<\rho.$
\end{enumerate}
Then $\left\|u_{n}-u\right\|_{H^{1}(\mathbb{R}^{3})}\to 0.$
\end{proposition}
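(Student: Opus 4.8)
The plan is to write $I=A+T$ with $A(u)=\frac12\|\nabla u\|_2^2$, set $v_n:=u_n-u$ and $\mu:=\|u\|_2$, and to run a dichotomy-exclusion argument: first promote the weak convergence to strong $L^2$ convergence, then upgrade to $H^1$. Since $u\neq0$ and $\|u\|_2\le\liminf\|u_n\|_2=\rho$ by weak lower semicontinuity, we have $0<\mu\le\rho$, and the goal is to show $\mu=\rho$ together with gradient convergence.

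First I would record the splittings coming purely from weak convergence. From $u_n\rightharpoonup u$ in $H^1$ (hence $\nabla u_n\rightharpoonup\nabla u$ and $u_n\rightharpoonup u$ in $L^2$) the quadratic quantities decompose as $A(u_n)=A(v_n)+A(u)+o(1)$ and $\|v_n\|_2^2=\rho^2-\mu^2+o(1)$. Feeding the mass relation into hypothesis (ii) gives the energy decomposition $I(u_n)=I(v_n)+I(u)+o(1)$. Next I would rescale $v_n$ to the correct mass: choose $\alpha_n$ with $\|\alpha_n v_n\|_2^2=\rho^2-\mu^2$, i.e. $\alpha_n=\sqrt{\rho^2-\mu^2}/\|v_n\|_2\to1$. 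Then $A(\alpha_n v_n)=\alpha_n^2 A(v_n)=A(v_n)+o(1)$ because $A(v_n)$ is bounded and $\alpha_n\to1$, while $T(\alpha_n v_n)=T(v_n)+o(1)$ by hypothesis (iii); hence $I(\alpha_n v_n)=I(v_n)+o(1)$. As $\alpha_n v_n\in B_{\sqrt{\rho^2-\mu^2}}$ is admissible for the level $I_{\rho^2-\mu^2}$, this yields $\liminf I(v_n)\ge I_{\rho^2-\mu^2}$.

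Combining this with $I(u)\ge I_{\mu^2}$ (as $u\in B_\mu$) and passing to the limit in the energy decomposition produces $I_{\rho^2}\ge I_{\mu^2}+I_{\rho^2-\mu^2}$. This contradicts the strong subadditivity (v) for every $0<\mu<\rho$, so necessarily $\mu=\rho$; then $\|v_n\|_2^2\to\rho^2-\mu^2=0$, i.e. $u_n\to u$ in $L^2$ and, by interpolation against the $H^1$-bound, in $L^q$ for all $2\le q<6$. For the gradient I would pass (via Ekeland's variational principle) to a minimizing sequence that is also a constrained Palais--Smale sequence, so $I'(u_n)-\lambda_n u_n\to0$ in $H^{-1}$; testing with $u_n$ and using $\langle I'(u_n),u_n\rangle=2A(u_n)+\langle T'(u_n),u_n\rangle=O(1)$ (the $H^1$-bound and hypothesis (i)) shows $\{\lambda_n\}$ is bounded. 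From $\|\nabla(u_n-u_m)\|_2^2=\langle I'(u_n)-I'(u_m),u_n-u_m\rangle-\langle T'(u_n)-T'(u_m),u_n-u_m\rangle$, the last bracket is $o(1)$ by (iv), while the first is $o(1)$ since $I'(u_n)=\lambda_n u_n+o(1)$ with $\lambda_n$ bounded and $\langle u_n,u_n-u_m\rangle_{L^2}\to0$ by the strong $L^2$ convergence. Hence $\{\nabla u_n\}$ is Cauchy and $u_n\to u$ in $H^1$.

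I expect the main obstacle to be the scaling comparison in the second paragraph: producing a genuine competitor for $I_{\rho^2-\mu^2}$ out of $v_n$ while showing that the rescaling disturbs the energy only by $o(1)$. The kinetic part is harmless because $A$ is quadratic and $\alpha_n\to1$, but for a general nonlinear $T$ the estimate $T(\alpha_n v_n)=T(v_n)+o(1)$ cannot be taken for granted --- this is precisely the content of (iii), and without it the lower bound $\liminf I(v_n)\ge I_{\rho^2-\mu^2}$ fails. A secondary point requiring care is the boundedness of the Lagrange multipliers $\lambda_n$, supplied by (i), which must be in hand before (iv) can be converted into Cauchyness of the gradients.
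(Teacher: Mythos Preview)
The paper does not supply its own proof of this proposition; it is quoted verbatim from \cite{MR2786153} (Bellazzini--Siciliano), so there is nothing in the present paper to compare against. Your sketch is precisely the concentration--compactness scheme of that reference: split $I(u_n)=I(u_n-u)+I(u)+o(1)$ via the Hilbert-space decomposition of $A$ together with (ii), rescale the remainder via (iii) to manufacture a competitor for $I_{\rho^2-\mu^2}$, and then use the strict subadditivity (v) to rule out $0<\mu<\rho$; finally recover $H^1$-convergence through Ekeland and the Cauchy estimate built from (i) and (iv). This is correct and is exactly how \cite{MR2786153} proceeds.

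One technical point you should make explicit: Ekeland's principle produces a \emph{new} minimizing sequence $\tilde u_n$ with $\|\tilde u_n-u_n\|_{H^1}\to0$ and $I'|_{B_\rho}(\tilde u_n)\to0$, and it is $\tilde u_n$ on which you test (i) and (iv). The hypotheses (i)--(iv) are stated for $u_n$, not $\tilde u_n$, so strictly speaking you need them to transfer. In the concrete application (the paper's Lemma \ref{L3}) these estimates are verified for \emph{any} bounded minimizing sequence, so the transfer is automatic; in the abstract statement it is implicit that (i)--(iv) hold for every minimizing sequence with the given weak limit, and you should say so. With that clarification your argument is complete.
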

\begin{lemma}(\cite[Lemma 3.2]{MR3957980})
The space $C_{0}^{\infty}(\mathbb{R}^{3})$ is dense in $\mathcal{A}:=\left\{\phi\in D^{1,2}(\mathbb{R}^{3}):\Delta\phi\in L^{2}(\mathbb{R}^{3})\right\}$ normed by $\sqrt{\langle\phi,\phi\rangle_{\mathcal{D}}}$. Hence, $\mathcal{D}=\mathcal{A}.$
\end{lemma}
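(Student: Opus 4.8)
The plan is to reduce the statement $\mathcal{D}=\mathcal{A}$ to two assertions: (i) $(\mathcal{A},\langle\cdot,\cdot\rangle_{\mathcal{D}})$ is itself a Hilbert space, and (ii) $C_{0}^{\infty}(\mathbb{R}^{3})$ is dense in $\mathcal{A}$ for the norm $\|\cdot\|_{\mathcal{D}}:=\sqrt{\langle\cdot,\cdot\rangle_{\mathcal{D}}}$. Indeed, $\mathcal{D}$ is by definition the completion of $C_{0}^{\infty}(\mathbb{R}^{3})$ for this norm; once (i) and (ii) hold, $\mathcal{A}$ is a complete space containing $C_{0}^{\infty}(\mathbb{R}^{3})$ as a dense subspace, so it coincides with that completion. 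Note that on $\mathcal{A}$ one has $\|\phi\|_{\mathcal{D}}^{2}=\|\nabla\phi\|_{2}^{2}+\|\Delta\phi\|_{2}^{2}<\infty$, so the norm is well defined there. The bulk of the work is (ii), which I would obtain by the classical two-stage scheme: first cut off to reduce to compactly supported functions, then mollify.

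For (i), I would take $\{\phi_{n}\}\subset\mathcal{A}$ Cauchy in $\|\cdot\|_{\mathcal{D}}$, so that $\{\nabla\phi_{n}\}$ and $\{\Delta\phi_{n}\}$ are Cauchy in $L^{2}(\mathbb{R}^{3})$. Since $D^{1,2}(\mathbb{R}^{3})$ is complete, $\phi_{n}\to\phi$ in $D^{1,2}(\mathbb{R}^{3})$ for some $\phi$, whence $\nabla\phi_{n}\to\nabla\phi$ in $L^{2}$, while $\Delta\phi_{n}\to g$ in $L^{2}$ for some $g$. Convergence in $D^{1,2}$ forces $\phi_{n}\to\phi$ in the sense of distributions, hence $\Delta\phi_{n}\to\Delta\phi$ distributionally; but also $\Delta\phi_{n}\to g$ distributionally, and uniqueness of distributional limits gives $g=\Delta\phi\in L^{2}$. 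Thus $\phi\in\mathcal{A}$ and $\phi_{n}\to\phi$ in $\|\cdot\|_{\mathcal{D}}$, proving completeness.

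For (ii), fix $\phi\in\mathcal{A}$ and a cutoff $\eta\in C_{0}^{\infty}(\mathbb{R}^{3})$ with $\eta\equiv1$ on $B_{1}$, $\operatorname{supp}\eta\subset B_{2}$, $0\le\eta\le1$; set $\eta_{R}(x):=\eta(x/R)$ and $\phi_{R}:=\eta_{R}\phi$. By the Leibniz rule,
\begin{align*}
\nabla\phi_{R}-\nabla\phi&=(\eta_{R}-1)\nabla\phi+\phi\,\nabla\eta_{R},\\
\Delta\phi_{R}-\Delta\phi&=(\eta_{R}-1)\Delta\phi+2\,\nabla\eta_{R}\cdot\nabla\phi+\phi\,\Delta\eta_{R}.
\end{align*}
The terms carrying $(\eta_{R}-1)$ vanish in $L^{2}$ by dominated convergence, since $\eta_{R}\to1$ pointwise with $\nabla\phi,\Delta\phi\in L^{2}$; the cross term is controlled by $CR^{-1}\|\nabla\phi\|_{L^{2}(A_{R})}\to0$ with $A_{R}:=B_{2R}\setminus B_{R}$. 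The delicate contributions are the zeroth-order terms $\phi\,\nabla\eta_{R}$ and $\phi\,\Delta\eta_{R}$, for which I would use $\|\nabla\eta_{R}\|_{\infty}\le CR^{-1}$, $\|\Delta\eta_{R}\|_{\infty}\le CR^{-2}$ together with the Sobolev embedding $D^{1,2}(\mathbb{R}^{3})\hookrightarrow L^{6}(\mathbb{R}^{3})$ and Hölder's inequality on the annulus. Since $|A_{R}|\le CR^{3}$,
$$\|\phi\,\nabla\eta_{R}\|_{2}^{2}\le CR^{-2}\!\int_{A_{R}}\!|\phi|^{2}\,dx\le CR^{-2}\,\|\phi\|_{L^{6}(A_{R})}^{2}\,|A_{R}|^{2/3}\le C\,\|\phi\|_{L^{6}(A_{R})}^{2}\longrightarrow0,$$
and similarly $\|\phi\,\Delta\eta_{R}\|_{2}^{2}\le CR^{-2}\|\phi\|_{L^{6}(A_{R})}^{2}\to0$, because the $L^{6}$-mass of $\phi$ on the far annulus tends to $0$. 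Hence $\phi_{R}\to\phi$ in $\|\cdot\|_{\mathcal{D}}$, with each $\phi_{R}\in\mathcal{A}$ of compact support.

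Finally I would mollify: for a standard mollifier $\rho_{\varepsilon}$, the function $\phi_{R}\ast\rho_{\varepsilon}$ stays compactly supported and lies in $C_{0}^{\infty}(\mathbb{R}^{3})$, and since convolution commutes with $\nabla$ and $\Delta$ one has $\nabla(\phi_{R}\ast\rho_{\varepsilon})=(\nabla\phi_{R})\ast\rho_{\varepsilon}\to\nabla\phi_{R}$ and $\Delta(\phi_{R}\ast\rho_{\varepsilon})\to\Delta\phi_{R}$ in $L^{2}$ as $\varepsilon\to0$; thus $\phi_{R}\ast\rho_{\varepsilon}\to\phi_{R}$ in $\|\cdot\|_{\mathcal{D}}$. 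A triangle-inequality (diagonal) argument then yields a sequence in $C_{0}^{\infty}(\mathbb{R}^{3})$ converging to $\phi$, establishing (ii) and hence $\mathcal{D}=\mathcal{A}$. I expect the main obstacle to be exactly the control of the zeroth-order cutoff terms $\phi\,\nabla\eta_{R}$ and $\phi\,\Delta\eta_{R}$: unlike in an ordinary Sobolev space, $\phi$ need not belong to $L^{2}$, so the decay of the cutoff derivatives must be matched precisely against the cubic volume growth of the annulus through the \emph{critical} embedding $D^{1,2}\hookrightarrow L^{6}$, and it is this borderline balancing of powers of $R$ that makes the tails disappear.
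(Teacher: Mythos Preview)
Your argument is correct. Note, however, that the paper does not supply its own proof of this lemma: it is stated as a quotation of \cite[Lemma~3.2]{MR3957980} and used as a black box, so there is no ``paper's approach'' to compare against. Your two-step scheme (completeness of $\mathcal{A}$, then cutoff plus mollification for density) is precisely the standard route one finds in the cited source, and the borderline Hölder/Sobolev balance you identify---$R^{-2}$ from $|\nabla\eta_R|^2$ cancelling exactly against $|A_R|^{2/3}\sim R^{2}$, leaving only the vanishing tail $\|\phi\|_{L^{6}(A_R)}$---is indeed the crux that distinguishes this from the trivial $H^{2}$ case.
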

From \cite{MR3957980} we obtain that there exists a unique solution $\phi_{u}\in\mathcal{D}$ to the second equation in \eqref{P1}. Its expression is
\begin{align}
\phi_{u}(x):=\int_{\mathbb{R}^{3}}\frac{1-e^{-|x-y|}}{|x-y|}u^{2}(y)dy.
\end{align}
Notice that if we assume $u_{\lambda}(\cdot)=\lambda^{\alpha}u(\lambda^{\beta}(\cdot)),$ $\alpha,\beta\in\mathbb{R},$ $\lambda>0,$ then
\begin{align}\label{A16}
\begin{split}
\phi_{u_{\lambda}}(x)&=\int_{\mathbb{R}^{3}}\lambda^{2\alpha+\beta}
(1-e^{-\frac{1}{\lambda^{\beta}}|\lambda^{\beta}x-\lambda^{\beta}y|})
\frac{|u(\lambda^{\beta}y)|^{2}}{|\lambda^{\beta}x-\lambda^{\beta}y|}dy\\
&=\lambda^{2(\alpha-\beta)}\int_{\mathbb{R}^{3}}
(1-e^{-\frac{1}{\lambda^{\beta}}|\lambda^{\beta}x-y|})
\frac{|u(y)|^{2}}{|\lambda^{\beta}x-y|}dy.
\end{split}
\end{align}
Moreover $\phi_{u}$ has the following properties.
\begin{lemma}(\cite{MR3957980}\label{L2})
For every $u\in H^{1}(\mathbb{R}^{3}),$ we have:
\begin{enumerate}[(i)]
		\item $\phi_{u}\ge 0$ for all $u\in H^{1}(\mathbb{R}^{3})$.
		\item $\left\|\phi_{u}\right\|_{\mathcal{D}}\le C\left\|u\right\|^{2}$ and
$\int_{\mathbb{R}^{3}}\phi_{u}u^{2}dx\le C\left\|u\right\|_{\frac{12}{5}}^{4}.$
		\item if $u_{n}\rightharpoonup u$ in $H^{1}(\mathbb{R}^{3}),$ then $\phi_{u_{n}}\rightharpoonup\phi_{u}$ in $\mathcal{D}.$
	\end{enumerate}
\end{lemma}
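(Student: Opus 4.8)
The three assertions are essentially independent, and I would prove them in the order stated, leaning on the explicit representation of $\phi_u$ and on the fact that $\phi_u\in\mathcal{D}$ is the unique weak solution of $-\Delta\phi+\Delta^2\phi=4\pi u^2$. For part (i) the plan is to read nonnegativity directly off the formula: since $1-e^{-r}\ge 0$ for every $r\ge 0$, the kernel $\frac{1-e^{-|x-y|}}{|x-y|}$ is pointwise nonnegative, and as $u^2(y)\ge 0$ the integral defining $\phi_u(x)$ is nonnegative for a.e.\ $x$. For part (ii) I would test the equation against $\phi_u$ itself to obtain the identity
$$\|\phi_u\|_{\mathcal{D}}^2=\int_{\mathbb{R}^3}|\nabla\phi_u|^2\,dx+\int_{\mathbb{R}^3}|\Delta\phi_u|^2\,dx=4\pi\int_{\mathbb{R}^3}u^2\phi_u\,dx,$$
and then estimate the right-hand side by H\"older (with exponents $\frac{6}{5}$ and $6$) together with the continuous embedding $\mathcal{D}\hookrightarrow D^{1,2}\hookrightarrow L^6$:
$$\int_{\mathbb{R}^3}u^2\phi_u\,dx\le\|u^2\|_{6/5}\,\|\phi_u\|_6=\|u\|_{12/5}^2\,\|\phi_u\|_6\le C\|u\|_{12/5}^2\,\|\phi_u\|_{\mathcal{D}}.$$
Dividing through gives $\|\phi_u\|_{\mathcal{D}}\le C\|u\|_{12/5}^2\le C\|u\|^2$, where the last inequality is the Sobolev embedding $H^1(\mathbb{R}^3)\hookrightarrow L^{12/5}(\mathbb{R}^3)$; substituting this bound back into the displayed identity yields $\int_{\mathbb{R}^3}\phi_u u^2\,dx=\frac{1}{4\pi}\|\phi_u\|_{\mathcal{D}}^2\le C\|u\|_{12/5}^4$.

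For part (iii), suppose $u_n\rightharpoonup u$ in $H^1(\mathbb{R}^3)$. Then $\{u_n\}$ is bounded in $H^1$, so $\{\|u_n\|_{12/5}\}$ is bounded, and by part (ii) the sequence $\{\phi_{u_n}\}$ is bounded in $\mathcal{D}$; hence, along a subsequence, $\phi_{u_n}\rightharpoonup\psi$ in $\mathcal{D}$ for some $\psi$. The plan is to show $\psi=\phi_u$ and then invoke uniqueness of the weak limit to upgrade to convergence of the full sequence. For each $v\in C_0^\infty(\mathbb{R}^3)$ the weak formulation reads $\langle\phi_{u_n},v\rangle_{\mathcal{D}}=4\pi\int_{\mathbb{R}^3}u_n^2 v\,dx$; the left-hand side tends to $\langle\psi,v\rangle_{\mathcal{D}}$ by weak convergence, while for the right-hand side the compact embedding $H^1\hookrightarrow L^2(\mathrm{supp}\,v)$ gives $u_n\to u$ strongly in $L^2(\mathrm{supp}\,v)$, so $\int u_n^2 v\to\int u^2 v$. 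Therefore $\langle\psi,v\rangle_{\mathcal{D}}=4\pi\int u^2 v=\langle\phi_u,v\rangle_{\mathcal{D}}$ for all such $v$, and by the density of $C_0^\infty(\mathbb{R}^3)$ in $\mathcal{D}$ (the preceding lemma, which also gives $\mathcal{D}=\mathcal{A}$) we conclude $\psi=\phi_u$.

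I expect part (iii) to be the main obstacle: the source term depends on $u$ through the nonlinear map $u\mapsto u^2$, which is only \emph{locally} weakly continuous, so the limit cannot be passed through the nonlocal operator directly. The resolution is to work against compactly supported test functions, where local strong $L^2$ convergence is available, and then to extend to all of $\mathcal{D}$ by density; the quantitative bound from part (ii) is precisely what supplies the weak compactness of $\{\phi_{u_n}\}$ in $\mathcal{D}$ that makes this argument run.
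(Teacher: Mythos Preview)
The paper does not supply its own proof of this lemma: it is quoted verbatim from d'Avenia--Siciliano \cite{MR3957980} and stated without argument. Your proof is correct and follows exactly the standard route one finds in that reference---nonnegativity from the explicit kernel, the energy identity $\|\phi_u\|_{\mathcal{D}}^2=4\pi\int u^2\phi_u$ combined with H\"older and the embedding $\mathcal{D}\hookrightarrow L^6$ for the estimates, and a weak-compactness/identification-by-testing argument for the weak continuity. One small point worth making explicit in part~(iii): from $u_n\to u$ in $L^2(\mathrm{supp}\,v)$ you need $\int u_n^2 v\to\int u^2 v$; this follows since $u_n^2-u^2=(u_n-u)(u_n+u)\to 0$ in $L^1(\mathrm{supp}\,v)$ (the first factor goes to zero in $L^2_{\mathrm{loc}}$, the second is bounded in $L^2_{\mathrm{loc}}$) and $v\in L^\infty$. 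With that detail filled in, the argument is complete.
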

\begin{lemma}\label{P2}
Assume $I$ satisfied the following conditions
\begin{enumerate}[(i)]
    \item $I$ satisfied the weak subadditivity inequality, namely:
        \begin{align}\label{A14}
        I_{\rho^{2}}\le I_{\mu^{2}}+I_{\rho^{2}-\mu^{2}}\ \ \ \ \ for\ all\ 0<\mu<\rho.
        \end{align}
    \item $-\infty<I_{s^{2}}<0$ for all $s>0$.
	\item $s\longmapsto I_{s^{2}}$ is continuous.
    \item $\lim_{s\to 0}\frac{I_{s^{2}}}{s^{2}}=0$.
    \item $\forall u\in M(\rho):=\bigcup_{\mu\in(0,\rho]}
    \left\{u\in B_{\mu}:I(u)=I_{\mu^{2}}\right\}$, $\exists g_{u}\in G(u)$ admissible, such that $\frac{d}{d\theta}h_{g_{u}}(\theta)\mid_{\theta=1}\neq 0.$
\end{enumerate}
Then $(\mathbf{HD})$ holds. Hence, we know $I_{\rho^{2}}< I_{\mu^{2}}+I_{\rho^{2}-\mu^{2}}$ for any $0<\mu<\rho.$
\end{lemma}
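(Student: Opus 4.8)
The plan is to reduce $(\mathbf{HD})$ to the strict monotonicity of the auxiliary function $\Psi(s):=I_{s^2}/s^2$ on $(0,\rho]$ and to argue by contradiction: assuming $\Psi$ fails to be strictly decreasing, I will manufacture a mass level at which a genuine minimizer exists but the scaling path cannot satisfy condition (v). First I would record the soft properties of $\Psi$: by (ii) it is strictly negative, by (iii) it is continuous, and by (iv) it tends to $0$ as $s\to 0^+$. Suppose $\Psi$ is not strictly decreasing, so there exist $0<s_1<s_2\le\rho$ with $\Psi(s_1)\le\Psi(s_2)$. Passing to the mass variable $m=s^2$ and writing $\gamma(m):=\Psi(\sqrt m)=I_m/m$, the infimum $L:=\inf_{(0,s_2^2]}\gamma$ is strictly negative and, since $\gamma\to 0$ near the origin, it is attained in the interior. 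I would let $m^*$ be the smallest minimizer of $\gamma$ on $(0,s_2^2]$; then $\gamma(m)>\gamma(m^*)$ for every $m\in(0,m^*)$, while $\gamma(m)\ge\gamma(m^*)$ for every $m\in(0,s_2^2]$. Moreover $m^*<s_2^2\le\rho^2$, for if $s_2^2$ were the only minimizer then $\Psi(s_1)\le\Psi(s_2)$ would force $s_1^2$ to be a minimizer as well.

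The key analytic step is that $m^*$ is a point of strict subadditivity. For any $0<a<m^*$ both $a$ and $m^*-a$ lie in $(0,m^*)$, so $\gamma(a)>\gamma(m^*)$ and $\gamma(m^*-a)\ge\gamma(m^*)$; multiplying by the (positive) masses and adding gives
\[
I_a+I_{m^*-a}=a\,\gamma(a)+(m^*-a)\,\gamma(m^*-a)>m^*\gamma(m^*)=I_{m^*}.
\]
Thus the strict subadditivity inequality holds at the level $m^*$, and feeding this (together with the weak subadditivity (i)) into the concentration--compactness scheme behind Proposition \ref{P4} yields compactness of every minimizing sequence on $B_{\sqrt{m^*}}$, hence an actual minimizer $u^*\in B_{\sqrt{m^*}}$ with $I(u^*)=I_{m^*}$. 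Since $0<m^*\le\rho^2$, we have $u^*\in M(\rho)$.

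Now I would invoke (v) to select an admissible scaling path $g_{u^*}$ with $h_{g_{u^*}}'(1)\ne 0$, and contradict it using the minimality of $m^*$. For $\theta$ near $1$ the mass $\Theta_{g_{u^*}}(\theta)m^*$ stays in $(0,s_2^2]$, so $\gamma(\Theta_{g_{u^*}}(\theta)m^*)\ge\gamma(m^*)$, that is $I_{\Theta_{g_{u^*}}(\theta)m^*}\ge\Theta_{g_{u^*}}(\theta)I(u^*)$. Because $g_{u^*}(\theta)$ lies on the sphere of radius $\sqrt{\Theta_{g_{u^*}}(\theta)m^*}$, one has $I(g_{u^*}(\theta))\ge I_{\Theta_{g_{u^*}}(\theta)m^*}$, and therefore
\[
h_{g_{u^*}}(\theta)=I(g_{u^*}(\theta))-\Theta_{g_{u^*}}(\theta)I(u^*)\ge 0=h_{g_{u^*}}(1)
\]
for all $\theta$ near $1$. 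Hence $\theta=1$ is a local minimum of the differentiable function $h_{g_{u^*}}$, which forces $h_{g_{u^*}}'(1)=0$ and contradicts (v). This contradiction shows $\Psi$ is strictly decreasing, i.e.\ $(\mathbf{HD})$ holds; the strict subadditivity $I_{\rho^2}<I_{\mu^2}+I_{\rho^2-\mu^2}$ then follows from the elementary computation already recorded in the introduction.

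I expect the main obstacle to be the existence step, namely guaranteeing a genuine minimizer at the critical level $m^*$ so that (v) can be applied. The location of $m^*$, the one-sided sign of $h_{g_{u^*}}$, and the final computation are all soft, but passing from strict subadditivity at $m^*$ to actual compactness of minimizing sequences requires the full concentration--compactness analysis underlying Proposition \ref{P4}, in particular the exclusion of vanishing and of dichotomy; this is the delicate point that must be carried out carefully.
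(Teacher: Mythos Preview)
The paper does not actually prove this lemma; it simply cites Theorem~2.1 of Bellazzini--Siciliano \cite{MR2826402}. Your proposal reproduces essentially the argument of that theorem: use (ii)--(iv) to locate an interior minimum $m^{*}$ of $m\mapsto I_{m}/m$ on a suitable interval, deduce strict subadditivity at that single level, whence (with the splitting and non-vanishing lemmas behind Proposition~\ref{P4} and Lemma~\ref{L4}) a genuine minimizer $u^{*}\in M(\rho)$ exists, and then derive the contradiction with (v) from the observation that $\theta=1$ is a local minimum of $h_{g_{u^{*}}}$. So your approach is correct and coincides with the cited reference; the ``main obstacle'' you flag is exactly the point where Bellazzini--Siciliano also invoke the concentration--compactness machinery, and in the present setting this is supplied by Lemma~\ref{L3} and Lemma~\ref{L4}.
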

\begin{proof}
The detailed proof process can be found in Theorem 2.1 of references \cite{MR2826402}.
\end{proof}
\begin{lemma}\label{L 1}
	If $p\in(2,\frac{10}{3}),$ then for every $\rho>0$ the functional $I$ is bounded from below and coercive on $B_{\rho}.$
\end{lemma}
\begin{proof}
Due to Gagliardo--Nirenberg inequality, we can get
\begin{align*}
\left\|u\right\|_{p}^{p}\le C_{p}\left\|u\right\|_{2}^{\frac{6-p}{2}}\left\|\nabla u\right\|_{2}^{\frac{3p}{2}-3}.
\end{align*}
From \eqref{A13} it is easily to know
\begin{align*}
    \begin{split}
    I(u)&\ge \frac{1}{2}\int_{\mathbb{R}^{3}}|\nabla u|^{2}dx-\frac{1}{p}\int_{\mathbb{R}^{3}}|u|^{p}dx\\&
    \ge \frac{1}{2}\left\|\nabla u\right\|_{2}^{2}-\frac{1}{p}C_{p}\left\|u\right\|_{2}^{\frac{6-p}{2}}
    \left\|\nabla u\right\|_{2}^{\frac{3p}{2}-3}.
    \end{split}
\end{align*}
Because of $p<\frac{10}{3},$ hence $\frac{3p}{2}-3<2,$ we can obtain
$$I(u)\ge\frac{1}{2}\left\|\nabla u\right\|_{2}^{2}+O(\left\|\nabla u\right\|_{2}^{2}).$$
\end{proof}
\begin{lemma}\label{L3}
If $p\in(2,\frac{10}{3})$, let ${u_{n}}\subset B_{\rho}$ be a minimizing sequence for $I_{\rho^{2}}$ such that $u_{n}\rightharpoonup u\neq 0,$ then $T$ satisfies the following properties
\begin{enumerate}[(i)]
    \item $\langle T'(u_{n}),u_{n}\rangle=O(1)$;
	\item $T(u_{n}-u)+T(u)=T(u_{n})+o(1)$;
    \item $T(\alpha_{n}(u_{n}-u))-T(u_{n}-u)=o(1),$ where $\alpha_{n}=\frac{\rho^{2}-\mu^{2}}{\left\|u_{n}-u\right\|_{2}}$;
    \item $\langle T'(u_{n})-T'(u_{m}),u_{n}-u_{m}\rangle=o(1)$.
\end{enumerate}
\end{lemma}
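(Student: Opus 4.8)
```latex
\begin{proof}[Proof proposal]
The plan is to verify the four assertions by exploiting the decomposition $T = B + C$ together with the structural estimates for $\phi_u$ recorded in Lemma \ref{L2} and the compactness afforded by the weak convergence $u_n \rightharpoonup u$ in $H^1(\mathbb{R}^3)$. The starting observation is that a minimizing sequence for $I_{\rho^2}$ is bounded in $H^1(\mathbb{R}^3)$: this is immediate from the coercivity proved in Lemma \ref{L 1}, since $I(u_n)$ is bounded above (it converges to $I_{\rho^2}$) and $\|u_n\|_2 = \rho$ is fixed. Boundedness in $H^1$ gives, via the continuous embedding $H^1(\mathbb{R}^3) \hookrightarrow L^q(\mathbb{R}^3)$ for $q \in [2,6]$, uniform bounds on $\|u_n\|_p$ and on $\|u_n\|_{12/5}$, the latter being exactly the norm that controls the nonlocal term through the inequality $\int_{\mathbb{R}^3}\phi_{u_n}u_n^2\,dx \le C\|u_n\|_{12/5}^4$ from Lemma \ref{L2}(ii).

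For assertion (i), I would compute $\langle T'(u_n),u_n\rangle$ explicitly. Differentiating $B$ and $C$ gives $\langle B'(u),u\rangle = \int_{\mathbb{R}^3}\phi_u u^2\,dx$ and $\langle C'(u),u\rangle = -\int_{\mathbb{R}^3}|u|^p\,dx$, each of which is uniformly bounded by the $L^{12/5}$ and $L^p$ bounds just noted; hence $\langle T'(u_n),u_n\rangle = O(1)$. For assertion (ii), the point is a Brezis--Lieb type splitting: for the local term $C$ the classical Brezis--Lieb lemma gives $\|u_n\|_p^p - \|u_n - u\|_p^p - \|u\|_p^p = o(1)$, while for the nonlocal term $B$ one needs the analogous splitting $\int \phi_{u_n}u_n^2 - \int\phi_{u_n-u}(u_n-u)^2 - \int\phi_u u^2 = o(1)$. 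This nonlocal Brezis--Lieb identity follows from the weak continuity $\phi_{u_n}\rightharpoonup\phi_u$ in $\mathcal{D}$ stated in Lemma \ref{L2}(iii), combined with the bilinearity of the map $(u,v)\mapsto \int\phi_u v^2$ and the strong $L^q_{loc}$ convergence $u_n \to u$; expanding $u_n = (u_n - u) + u$ and discarding cross terms that vanish by weak convergence yields the claim.

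Assertion (iii) is a scaling-perturbation estimate. Since $\alpha_n \to 1$ (because $\|u_n - u\|_2^2 \to \rho^2 - \mu^2$ by the weak convergence and a Brezis--Lieb splitting of the $L^2$ norms, where $\mu^2 = \|u\|_2^2$), the multiplier $\alpha_n$ is close to $1$, and $T(\alpha_n v) - T(v)$ is controlled by the homogeneity properties of $B$ and $C$: one has $B(\alpha v) = \alpha^4 B(v)$ and $C(\alpha v) = \alpha^p C(v)$, so $T(\alpha_n(u_n-u)) - T(u_n-u) = (\alpha_n^4 - 1)B(u_n-u) + (\alpha_n^p - 1)C(u_n-u)$, and since $\alpha_n^4 - 1, \alpha_n^p - 1 \to 0$ while $B(u_n-u), C(u_n-u)$ stay bounded, the whole expression is $o(1)$. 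Assertion (iv) is the monotonicity-type estimate $\langle T'(u_n) - T'(u_m), u_n - u_m\rangle = o(1)$, which I would derive by splitting into the $C$ and $B$ contributions and using the uniform local strong convergence of the sequence together with the compactness of $\phi_{u_n} \rightharpoonup \phi_u$.

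The main obstacle I anticipate is the nonlocal Brezis--Lieb splitting needed for (ii) and the analogous control of cross terms in (iv). Unlike the purely local term, the nonlocal energy $\int\phi_u v^2$ couples $u$ and $v$ through the convolution kernel $\frac{1 - e^{-|x-y|}}{|x-y|}$, so the cross terms do not obviously vanish and must be shown to go to zero using the weak convergence $\phi_{u_n}\rightharpoonup\phi_u$ in $\mathcal{D}$ together with the compact embedding of $\mathcal{D}$ into spaces where $(u_n - u)^2$ converges suitably. Care is required because the convergence is only weak in the full space and strong only locally; the decay of the kernel and the boundedness of the sequence in $L^{12/5}$ should supply the uniform tail control needed to close the argument, but pinning down this estimate rigorously is where the real work lies.
\end{proof}
```
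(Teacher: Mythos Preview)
Your plan matches the paper's proof closely for (i), (ii), and (iii). In particular, (iii) is done exactly as you describe: Brezis--Lieb on the $L^{2}$ norm gives $\alpha_{n}\to 1$, and then homogeneity $B(\alpha v)=\alpha^{4}B(v)$, $C(\alpha v)=\alpha^{p}C(v)$ finishes it. For (ii), the paper carries out the nonlocal Brezis--Lieb splitting by writing out all four mixed terms $I_{n}^{(1)},\dots,I_{n}^{(4)}$ in the quartic form $\iint G(x,y)\,u^{2}(x)u^{2}(y)\,dx\,dy$ and proving each converges to the same limit; it does this not via Lemma~\ref{L2}(iii) directly, but by showing $\phi_{u_{n}}\to\phi_{u}$ a.e.\ together with a uniform $L^{6}$ bound, then pairing against $u^{2}\in L^{6/5}$. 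Your route via weak convergence in $\mathcal{D}$ is equivalent, though note the form is quartic (not bilinear).

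The one place your sketch diverges from the paper is (iv). Invoking ``uniform local strong convergence'' and ``compactness of $\phi_{u_{n}}\rightharpoonup\phi_{u}$'' will not close the estimate, because the pairings $\int_{\mathbb{R}^{3}}\phi_{u_{n}}u_{n}(u_{n}-u_{m})\,dx$ and $\int_{\mathbb{R}^{3}}(|u_{n}|^{p-2}u_{n}-|u_{m}|^{p-2}u_{m})(u_{n}-u_{m})\,dx$ are global and the tails are not controlled by local compactness. The paper instead applies H\"{o}lder to get
\[
\int_{\mathbb{R}^{3}}\phi_{u_{n}}u_{n}(u_{n}-u_{m})\,dx \le C\|\phi_{u_{n}}\|_{6}\|u_{n}\|_{2}\|u_{n}-u_{m}\|_{3}
\quad\text{and}\quad
\Bigl|\int_{\mathbb{R}^{3}}|u_{n}|^{p-1}(u_{n}-u_{m})\,dx\Bigr|\le C\|u_{n}-u_{m}\|_{p},
\]
so that (iv) reduces to $\|u_{n}-u_{m}\|_{p},\|u_{n}-u_{m}\|_{3}\to 0$, which the paper obtains from Gagliardo--Nirenberg and $\|u_{n}-u_{m}\|_{2}\to 0$. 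In the logic of Proposition~\ref{P4} this $L^{2}$-Cauchy property is available at the stage where (iv) is actually used (after (i)--(iii) plus the strong subadditivity have forced $\|u\|_{2}=\rho$); you should structure your argument for (iv) the same way rather than relying on local compactness.
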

\begin{proof}
Due to $u_{n}$ be a minimizing sequence, we can easily to deduce that $\langle T'(u_{n}),u_{n}\rangle=O(1).$ From Lemma \ref{L 1} we know that ${u_{n}}$ is bounded in $H^{1}(\mathbb{R}^{3})$. Therefore, ${u_{n}}$ is bounded in $L^{s}$ for $s\in[2,2^{*}]$ and there exists $u\in H^{1}(\mathbb{R}^{3})$ such that $u_{n}\rightharpoonup u$ in $H^{1}(\mathbb{R}^{3})$ and $u_{n}\to u$ a.e. in $\mathbb{R}^{3}.$ We give these notations
$$G(x,y)=\frac{1-e^{-|x-y|}}{|x-y|},\ \ \ \ \ \ \  A:=\int_{\mathbb{R}^{3}}\int_{\mathbb{R}^{3}}u^{2}(y)u^{2}(x)G(x,y)dxdy,$$
\begin{align*}
    \begin{split}
    &I_{n}^{(1)}:=\int_{\mathbb{R}^{3}}\int_{\mathbb{R}^{3}}u^{2}_{n}(y)u^{2}(x)G(x,y)dxdy,\\
    &I_{n}^{(2)}:=\int_{\mathbb{R}^{3}}\int_{\mathbb{R}^{3}}u_{n}(y)u(y)u_{n}(x)u(x)G(x,y)dxdy,\\
    &I_{n}^{(3)}:=\int_{\mathbb{R}^{3}}\int_{\mathbb{R}^{3}}u_{n}^{2}(y)u_{n}(x)u(x)G(x,y)dxdy,\\
    &I_{n}^{(3)}:=\int_{\mathbb{R}^{3}}\int_{\mathbb{R}^{3}}u_{n}(y)u(y)u^{2}(x)G(x,y)dxdy.
    \end{split}
\end{align*}
From direct calculation we can deduce
\begin{align*}
B(u_{n}-u)-(B(u_{n})-B(u))=2I_{n}^{(1)}+4I_{n}^{(2)}-4I_{n}^{(3)}-4I_{n}^{(4)}+2A.
\end{align*}
Next we will show that $\lim_{n\to\infty}I_{n}^{(i)}=A,\ \ i=1,2,3,4.$ Let
$$v_{n}(x):=\int_{\mathbb{R}^{3}}\frac{1-e^{-|x-y|}}{|x-y|}u^{2}_{n}(y)dy,\ \ \ v(x)=\int_{\mathbb{R}^{3}}\frac{1-e^{-|x-y|}}{|x-y|}u^{2}(y)dy.$$
\begin{align*}
\begin{split}
|v_{n}(x)-v(x)|&\le\int_{\mathbb{R}^{3}}|u_{n}^{2}(y)-u^{2}(y)|\frac{1-e^{-|x-y|}}{|x-y|}dy\\
&\le\int_{\mathbb{R}^{3}}|u_{n}^{2}(y)-u^{2}(y)|\frac{1}{|x-y|}dy\\
&\le C\left\|u_{n}^{2}-u^{2}\right\|_{L^{2}(B_{R}(x))}\left(\int_{|y-x|\le R}\frac{1}{|x-y|^{2}}dy\right)^{\frac{1}{2}}\\&\ \ \ +C\left\|u_{n}^{2}-u^{2}\right\|_{L^{\frac{4}{3}}(B^{c}_{R}(x))}\left(\int_{|y-x|\ge R}\frac{1}{|x-y|^{2}}dy\right)^{\frac{1}{2}}.
\end{split}
\end{align*}
Letting $n\to\infty$ and $R\to\infty$, we know $v_{n}\to v$ a.e. on $\mathbb{R}^{3}.$
Due to $v_{n}=\phi_{u_{n}}\in\mathcal{D}(\mathbb{R}^{3})$ and Lemma \ref{L2}, we know
\begin{align*}
\left\|v_{n}\right\|_{6}=C \left\|v_{n}\right\|_{D}\le C\left\|u_{n}\right\|_{\frac{12}{5}}^{2}\le C.
\end{align*}
Hence we can assume $v_{n}\rightharpoonup v$ in $L^{6}(\mathbb{R}^{3})$ and due to $u\in H^{1}(\mathbb{R}^{3})$ then $u^{2}\in L^{\frac{6}{5}}(\mathbb{R}^{3}),$ we can get
\begin{align*}
\int_{\mathbb{R}^{3}}v_{n}u^{2}dx\to\int_{\mathbb{R}^{3}}vu^{2}dx.
\end{align*}
We prove that $\lim_{n\to\infty}I_{n}^{(1)}=A.$ Next we prove $\lim_{n\to\infty}I_{n}^{(2)}=A.$ we set
\begin{align*}
\hat{v}_{n}(x):=\int_{\mathbb{R}^{3}}\frac{1-e^{-|x-y|}}{|x-y|}u_{n}(y)u(y)dy.
\end{align*}
A similar proof process to the previous one can easily obtain $\hat{v}_{n}\to v$ a.e. in $\mathbb{R}^{3}.$
From Hardy-Littlewood-Sobolev inequality, $\hat{v}_{n}\in L^{6}(\mathbb{R}^{3})$ and
\begin{align*}
\left\|\hat{v}_{n}\right\|_{6}\le\int_{\mathbb{R}^{3}}\frac{u_{n}(y)u(y)}{|x-y|}dy\le C\left\|u_{n} u\right\|_{\frac{6}{5}}\le C\left\|u_{n}\right\|_{\frac{12}{5}}\left\|u\right\|_{\frac{12}{5}}.
\end{align*}
So we get
\begin{align*}
\left\|\hat{v}_{n}u_{n}\right\|_{2}\le\left\|\hat{v}_{n}\right\|_{6}\left\|u_{n}\right\|_{3}
\le C\left\|u_{n}\right\|^{2}\left\|u\right\|\le C.
\end{align*}
Therefore up to a subsequence $\hat{v}_{n}u_{n}\rightharpoonup vu$ in $L^{2}(\mathbb{R}^{3}).$ Due to $u\in L^{2}(\mathbb{R}^{3})$ we get
\begin{align*}
\int_{\mathbb{R}^{3}}\hat{v}_{n}u_{n}udx\to\int_{\mathbb{R}^{3}}vu^{2}dx.
\end{align*}
Therefore, we prove that $\lim_{n\to\infty}I_{n}^{(2)}=A.$
A similar proof process can be deduced $\lim_{n\to\infty}I_{n}^{(3)}=A$ and $\lim_{n\to\infty}I_{n}^{(4)}=A.$
Hence we get
\begin{align*}
B(u_{n}-u)+B(u)=B(u_{n})+o(1).
\end{align*}
From Brezis-Leib lemma can deduce
\begin{align*}
C(u_{n}-u)+C(u)=C(u_{n})+o(1).
\end{align*}
$(ii)$ certified.

From Sobolev inequality and Gagliardo--Nirenberg inequality, we know
\begin{align*}
B(u_{n})=\int_{\mathbb{R}^{3}}\phi_{u_{n}}u_{n}^{2}dx\le C\left\|u_{n}\right\|_{\frac{12}{5}}^{4}\le C\left\|u_{n}\right\|_{2}^{3}\left\|\nabla u_{n}\right\|_{2}^{3}.
\end{align*}
Due to Brezis-Leib lemma, we know
\begin{align*}
\left\|u_{n}-u\right\|_{2}^{2}+\left\|u\right\|_{2}^{2}=\left\|u_{n}\right\|_{2}^{2}+o(1),
\end{align*}
hence $$\alpha_{n}=\frac{\rho^{2}-\mu^{2}}{\left\|u_{n}-u\right\|_{2}}\to 1.$$
We can know
\begin{align*}
\begin{split}
&B(\alpha_{n}(u_{n}-u))-B(u_{n}-u)=(\alpha_{n}^{4}-1)B(u_{n}-u)=o(1)\\
&C(\alpha_{n}(u_{n}-u))-C(u_{n}-u)=(\alpha_{n}^{p}-1)C(u_{n}-u)=o(1).
\end{split}
\end{align*}
$(iii)$ certified.
Due to $u_{n},u_{m}$ are minimizing sequence and Gagliardo--Nirenberg inequality we know
\begin{align*}
\left\|u_{n}-u_{m}\right\|_{p}\le\left\|u_{n}-u_{m}\right\|_{2}^{\frac{6-p}{2p}}
\left\|u_{n}-u_{m}\right\|_{2}^{\frac{3}{2}-3p}=o(1).
\end{align*}
From H\"{o}lder inequality we deduce
$$\int_{\mathbb{R}^{3}}|u_{n}|^{p-1}|u_{n}-u|dx\le
\left(\int_{\mathbb{R}^{3}}|u_{n}|^{p}dx\right)^{\frac{1}{q}}
\left(\int_{\mathbb{R}^{3}}|u_{n}-u|^{p}dx\right)^{\frac{1}{p}}=o(1),$$
where $q=\frac{p}{p-1}.$ Hence, we know
$$\left|\int_{\mathbb{R}^{3}}\left(|u_{n}|^{p-1}-|u_{m}|^{p-1}\right)
\left(u_{n}-u_{m}\right)dx\right|\le
C\left\|u_{n}-u_{m}\right\|_{p}=o(1).$$
We can obtain $\langle C'(u_{n})-C'(u_{m}),u_{n}-u_{m}\rangle=o(1),$ then we need to verification $B.$
From H\"{o}lder inequality we deduce
\begin{align}
\begin{split}
\int_{\mathbb{R}^{3}}\phi_{u_{n}}u_{n}(u_{n}-u_{m})dx&\le C\left\|\phi_{u_{n}}\right\|_{6}\left\|u_{n}\right\|_{2}\left\|
u_{n}-u_{m}\right\|_{3}\\
&\le C \left\|u_{n}\right\|^{2}\left\|u_{n}\right\|_{2}\left\|u_{n}-u_{m}\right\|_{3}\\&=o(1).
\end{split}
\end{align}
Hence, we get
$$\langle T'(u_{n})-T'(u_{m}),u_{n}-u_{m}\rangle=o(1).$$
\end{proof}
\begin{lemma}\label{L4}
For every $\rho>0,$ $\left\{u_{n}\right\}$ be a minimizing sequence in $B_{\rho}$ with $I_{\rho^{2}}<0$, then $u_{n}\rightharpoonup u\neq 0.$
\end{lemma}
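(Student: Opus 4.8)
The plan is to rule out the vanishing alternative by a concentration--compactness argument, exploiting the strict negativity of $I_{\rho^{2}}$ together with the sign of the three pieces of $I$, and then to recover a nonzero weak limit after a suitable translation.

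First I would use Lemma \ref{L 1}: since $I$ is coercive on $B_{\rho}$ and $I(u_{n})\to I_{\rho^{2}}$, the minimizing sequence $\{u_{n}\}$ is bounded in $H^{1}(\mathbb{R}^{3})$. Passing to a subsequence, $u_{n}\rightharpoonup u$ in $H^{1}(\mathbb{R}^{3})$, $u_{n}\to u$ in $L^{s}_{\mathrm{loc}}(\mathbb{R}^{3})$ for every $s\in[1,2^{\ast})$, and $u_{n}\to u$ a.e. It remains to show that, after translating, the limit is nontrivial.

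The decisive step is excluding vanishing. Suppose, for contradiction, that
$$\lim_{n\to\infty}\sup_{y\in\mathbb{R}^{3}}\int_{B_{R}(y)}|u_{n}|^{2}\,dx=0\quad\text{for some }R>0.$$
By Lions' vanishing lemma, a bounded sequence in $H^{1}(\mathbb{R}^{3})$ with this property converges to $0$ strongly in $L^{r}(\mathbb{R}^{3})$ for every $r\in(2,2^{\ast})$; since $p\in(2,\tfrac{10}{3})\subset(2,6)$ this forces $\|u_{n}\|_{p}\to0$, i.e. $C(u_{n})=-\tfrac{1}{p}\|u_{n}\|_{p}^{p}\to0$. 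Because $A(u_{n})\ge0$ and, by Lemma \ref{L2}(i), $B(u_{n})=\tfrac{1}{4}\int\phi_{u_{n}}|u_{n}|^{2}\,dx\ge0$, we obtain $I(u_{n})=A(u_{n})+B(u_{n})+C(u_{n})\ge C(u_{n})\to0$, whence $\liminf_{n}I(u_{n})\ge0$. This contradicts $I(u_{n})\to I_{\rho^{2}}<0$, so vanishing cannot occur.

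Consequently there exist $\delta>0$, $R>0$ and points $y_{n}\in\mathbb{R}^{3}$ with $\int_{B_{R}(y_{n})}|u_{n}|^{2}\,dx\ge\delta$ along a subsequence. Set $\tilde{u}_{n}:=u_{n}(\cdot+y_{n})$. Each term of $I$ is invariant under translations — this is clear for $A$ and $C$, while for $B$ it follows because the kernel $\tfrac{1-e^{-|x-y|}}{|x-y|}$ depends only on $x-y$ — so $\tilde{u}_{n}$ is again a minimizing sequence in $B_{\rho}$, bounded in $H^{1}(\mathbb{R}^{3})$, hence $\tilde{u}_{n}\rightharpoonup\tilde{u}$ up to a subsequence. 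Using the compact embedding $H^{1}(B_{R}(0))\hookrightarrow L^{2}(B_{R}(0))$ we get $\int_{B_{R}(0)}|\tilde{u}|^{2}\,dx=\lim_{n}\int_{B_{R}(0)}|\tilde{u}_{n}|^{2}\,dx\ge\delta>0$, so $\tilde{u}\neq0$; relabeling $\tilde{u}_{n}$ as $u_{n}$ gives the claim. The only point requiring care is the handling of the nonlocal term $B$ — both its nonnegativity and its translation invariance — but these are supplied by Lemma \ref{L2}(i) and the explicit form of $\phi_{u}$, so no serious obstacle arises.
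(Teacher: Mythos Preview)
Your argument is correct and follows the same route as the paper's proof: exclude vanishing via Lions' lemma using $A,B\ge 0$ and $I_{\rho^{2}}<0$, then translate to recover a nonzero weak limit through the compact embedding on a fixed ball. The only difference is that you are more explicit about the boundedness of $\{u_{n}\}$ (via Lemma~\ref{L 1}) and about the translation invariance of $B$, whereas the paper leaves these implicit.
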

\begin{proof}
Due to $\left\{u_{n}\right\}$ be a minimizing sequence in $B_{\rho}$ for $I_{\rho^{2}}$ and translation does not deform, we know for any sequence $\left\{y_{n}\right\}\subset\mathbb{R}^{3},$ $u(\cdot+y_{n})$ is still a minimizing sequence for $I_{\rho^{2}}$. From Lions' lemma, if
$$\lim_{n\to\infty}\left(\sup_{y\in\mathbb{R}^{3}}\int_{B(y,1)}|u_{n}|^{2}dx\right)=0,$$
where $B(a,r)=\left\{x\in\mathbb{R}^{3},\ |x-a|\le r\right\}.$ Then $u_{n}\to 0$ in $L^{q}(\mathbb{R}^{3})$ for $q\in(2,2^{*}),$ therefore $C(u_{n})\to 0.$  Moreover, From $I_{\rho^{2}}<0$ we obtained
$$\sup_{y\in\mathbb{R}^{3}}\int_{B(y,1)}|u_{n}|^{2}dx\ge\mu>0,$$
hence we can choose $y_{n}\in\mathbb{R}^{3}$ such that
$$\int_{B(0,1)}|u_{n}(\cdot+y_{n})|^{2}dx\ge\mu>0.$$
Since $H^{1}(B(0,1))\hookrightarrow L^{2}(B(0,1))$ is a compact embedding, the weak limit of $u_{n}(\cdot+y_{n})$ is nonzero. So it can be deduced that $u_{n}\rightharpoonup u\neq 0.$
\end{proof}
\section{Proof of the Theorem 1.1}

\begin{proof}[\textbf{Proof of Theorem 1.1}]
Now, proof Theorems \ref{T1} only need to prove that function $I$ satisfies the strong subadditivity inequality, namely:

There exists $\rho_{1}>0$ such that $I_{\mu^{2}}<0$ for all $\mu\in(\rho_{1},+\infty)$ and
$$I_{\rho^{2}}<I_{\mu^{2}}+I_{\rho^{2}-\mu^{2}},$$
for all $\rho>\rho_{1}$ and $0<\mu<\rho.$
We define $u_{\theta}(x)=\theta^{1-\frac{3}{2}\beta}u(\frac{x}{\theta^{\beta}}),$ then from direct calculations we can get
\begin{align*}
\begin{split}
&\left\|u_{\theta}\right\|_{2}=\theta\left\|u\right\|_{2},\\
&A(u_{\theta})=\frac{1}{2}\int_{\mathbb{R}^{3}}|\nabla u_{\theta}|^{2}dx=\theta^{2-2\beta}A(u),\\
&B(u_{\theta})=\int_{\mathbb{R}^{3}}\int_{\mathbb{R}^{3}}
\frac{1-e^{-|x-y|}}{|x-y|}u_{\theta}^{2}(x)u_{\theta}^{2}(y)dxdy,\\&
\ \ \ \ \ \ \ \ \le
\int_{\mathbb{R}^{3}}\int_{\mathbb{R}^{3}}
\frac{1}{|x-y|}u_{\theta}^{2}(x)u_{\theta}^{2}(y)dxdy=\theta^{4-\beta}H(u),\\
&C(u_{\theta})=\theta^{(1-\frac{3}{2}\beta)p+3\beta}C(u),
\end{split}
\end{align*}
where $H(u)=\int_{\mathbb{R}^{3}}
\int_{\mathbb{R}^{3}}\frac{1}{|x-y|}u^{2}(x)u^{2}(y)dxdy.$
We can also know
\begin{align*}
\begin{split}
I(u_{\theta})&=A(u_{\theta})+B(u_{\theta})+
C(u_{\theta})\\
&\le\theta^{2-2\beta}A(u)+\theta^{4-\beta}H(u)+
\theta^{(1-\frac{3}{2}\beta)p+3\beta}C(u)\\
&=\theta^{2}\left(I(u)+(\theta^{-2\beta}-1)A(u)+
(\theta^{(1-\frac{3}{2}\beta)p+3\beta-2}-1)
C(u)+\theta^{2-\beta}H(u)-B(u)\right)\\
&=\theta^{2}(I(u)+f(\theta,u)),
\end{split}
\end{align*}
where $f(\theta,u)=(\theta^{-2\beta}-1)A(u)+(\theta^{(1-\frac{3}{2}\beta)p+3\beta-2}-1)
C(u)+\theta^{2-\beta}H(u)-B(u).$

When $\beta=-2$, we get
$$I(u_{\theta})\le\theta^{6}A(u)+\theta^{6}H(u)+\theta^{4p-6}C(u),$$
and $4p-6>6$ for $3<p<\frac{10}{3}.$ Hence, for $\theta$ sufficiently large we have $I(u_{\theta})\to 0^{-}.$
Let $\left\{u_{n}\right\}$ be a minimizing sequence in $B_{\rho}$ with $I_{\rho^{2}}<0,$ then
\begin{align*}
\begin{split}
&0<k_{1}<A(u_{n})<k_{2},\\
&0<\eta_{1}<|C(u_{n})|<\eta_{2}.
\end{split}
\end{align*}
Indeed, if $A(u_{n})=o(1)$ we have $|C(u_{n})|=o(1)$ and $I_{\rho^{2}}=0.$ For $\beta=-2$ we get
$$f(\theta,u)=(\theta^{4}-1)A(u)+(\theta^{4p-8}-1)C(u)+\theta^{4}H(u)-B(u),$$
with $4p-8>4$ and
\begin{align*}
\begin{split}
&f'(\theta,u)\mid_{\theta=1}=4A(u)+4H(u)+(4p-8)C(u)<k<0,\\
&f''(\theta,u)=12\theta^{2}A(u)+(4p-8)(4p-9)\theta^{4p-10}C(u)+12\theta^{2}H(u)<k<0.
\end{split}
\end{align*}
we know $f(1,u)=H(u)-B(u)>0$ and $f(\theta,u_{n})<k(\theta)<0$ for any $\theta>1$. From the mean value theorem, we can get there exists $\theta_{0}$ such that if $\theta>\theta_{0}$ then $f(\theta,u)<0.$ hence
$$I_{\theta^{2}\rho^{2}}<\theta^{2}I(u_{n})=\theta^{2}I_{\rho^{2}}.$$
Let us suppose that $\mu^{2}<\rho^{2}-\mu^{2}$. We distinguish three cases
\begin{align*}
\begin{split}
&\bullet\ \  \mu^{2}<\rho^{2}-\mu^{2}<\rho_{1}^{2}\\
&\bullet\ \  \mu^{2}<\rho_{1}^{2}<\rho^{2}-\mu^{2}\\
&\bullet\ \  \rho_{1}^{2}<\mu^{2}<\rho^{2}-\mu^{2}.
\end{split}
\end{align*}
The first case is trivial. For the second one, we know $I_{\rho^{2}-\mu^{2}}>I_{\rho^{2}}$ and we conclude. For the third case
\begin{align}
\begin{split}
I_{\rho^{2}}&=I_{\frac{\rho^{2}}{\mu^{2}}\mu^{2}}<\frac{\rho^{2}}{\mu^{2}}I_{\mu^{2}}\\
&=\frac{\rho^{2}-\mu^{2}+\mu^{2}}{\mu^{2}}I_{\mu^{2}}\\
&=\frac{\rho^{2}-\mu^{2}}{\mu^{2}}
I_{\frac{\mu^{2}}{\rho^{2}-\mu^{2}}\rho^{2}-\mu^{2}}
+I_{\mu^{2}}\le I_{\mu^{2}}+I_{\rho^{2}-\mu^{2}}.
\end{split}
\end{align}
Since the strong subadditivity inequality condition holds, then combine Lemma \ref{L3} with Lemma \ref{L4}, we can apply Proposition \ref{P4} and conclude the proof of Theorem \ref{T1}.
\end{proof}
\section{Proof of the Theorem 1.2}
When $p\in(2,3)$ we apply Lemma \ref{P2} to proof strong subadditivity inequality. Hence the next step will be divided into several lemmas to verify all the assumptions in Lemma \ref{P2} in turn.
\begin{lemma}
$-\infty<I_{s^{2}}<0$ for all $s>0$ and condition \eqref{A14} holds.
\end{lemma}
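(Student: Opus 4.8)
The plan is to establish the three assertions separately, handling $I_{s^2}<0$, $I_{s^2}>-\infty$, and the weak subadditivity \eqref{A14} in turn. The finiteness from below, $I_{s^2}>-\infty$, is immediate from Lemma \ref{L 1}, since for $p\in(2,\frac{10}{3})$ the functional $I$ is bounded from below on every sphere $B_s$; this requires no new work. For the upper bound $I_{s^2}<0$, I would exhibit a single test function on $B_s$ with negative energy using the scaling already introduced in the proof of Theorem \ref{T1}. Fixing any $u\in B_s$ and setting $u_\theta(x)=\theta^{1-\frac32\beta}u(x/\theta^\beta)$, one has $\|u_\theta\|_2=\theta\|u\|_2$, so $u_{\theta}\in B_s$ precisely when we renormalize to keep the mass fixed; the cleaner route is a pure dilation $u_t(x)=t^{3/2}u(tx)$, which preserves $\|u_t\|_2=\|u\|_2=s$ and gives $A(u_t)=t^2A(u)$, $C(u_t)=t^{\frac{3p}{2}-3}C(u)$, and $B(u_t)\le t\,H(u)$ (using $\phi_{u_t}\le\int\frac{1}{|x-y|}u_t^2$ as in the Theorem \ref{T1} computation). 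Since $p>2$ forces $\frac{3p}{2}-3>0$, all three exponents on the positive terms exceed the exponent on $C(u_t)$ when $t\to 0^+$ is handled in the opposite regime; the correct choice is $t\to 0^+$ so that $A(u_t)\sim t^2\to 0$, $B(u_t)\le tH(u)\to 0$ dominate less slowly, while $C(u_t)\sim t^{\frac{3p}{2}-3}C(u)$ with $C(u)<0$. Because $\frac{3p}{2}-3<2$ and $<1$ exactly when $p<\frac{8}{3}$, one must check the exponents, but the essential point is that the negative term $C(u_t)$ decays strictly slower than $A(u_t)$ whenever $\frac{3p}{2}-3<2$, i.e. $p<\frac{10}{3}$; choosing $t$ small enough then makes $I(u_t)<0$, hence $I_{s^2}\le I(u_t)<0$ for every $s>0$.

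For the weak subadditivity \eqref{A14}, the standard approach is to use the homogeneity-like scaling in the mass. Given $0<\mu<\rho$, I would take near-minimizing sequences $\{v_n\}\subset B_\mu$ and $\{w_n\}\subset B_{\sqrt{\rho^2-\mu^2}}$ with $I(v_n)\to I_{\mu^2}$ and $I(w_n)\to I_{\rho^2-\mu^2}$, and translate $w_n$ far from $v_n$ so that the supports are asymptotically disjoint. Writing $z_n=v_n+w_n(\cdot-y_n)$ with $|y_n|\to\infty$, one has $\|z_n\|_2^2\to\mu^2+(\rho^2-\mu^2)=\rho^2$, so after a vanishing renormalization $z_n/\|z_n\|_2\cdot\rho\in B_\rho$. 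The terms $A$ and $C$ are additive up to $o(1)$ because the cross terms vanish by disjoint supports, and for the nonlocal term $B$ the interaction energy $\int\int G(x,y)v_n^2(x)w_n^2(y-y_n)\,dxdy\to 0$ since $G(x,y)\le\frac{1}{|x-y|}\to 0$ as $|y_n|\to\infty$ over the effective supports. This yields $I(z_n)\le I(v_n)+I(w_n)+o(1)$, and passing to the limit gives $I_{\rho^2}\le I_{\mu^2}+I_{\rho^2-\mu^2}$.

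The main obstacle I anticipate is controlling the nonlocal interaction term $B$ in the weak subadditivity step. Unlike the local terms, $B$ does not split cleanly: even with disjoint supports the Coulomb-type kernel $G(x,y)=\frac{1-e^{-|x-y|}}{|x-y|}$ produces a genuine long-range interaction, and one must verify the cross-energy truly vanishes under translation rather than merely being bounded. I would use the decay $G(x,y)\le\frac{1}{|x-y|}$ together with the $L^{12/5}$ bounds from Lemma \ref{L2}(ii) and a dominated-convergence argument on the translated densities to show the interaction integral is $o(1)$; the exponential factor $1-e^{-|x-y|}$ actually helps, since $G$ remains bounded near the diagonal, but the polynomial tail is what forces the careful translation-to-infinity estimate. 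A secondary subtlety is confirming that renormalizing $z_n$ back onto $B_\rho$ perturbs $I$ only by $o(1)$, which follows from the continuity of $A$, $B$, $C$ under the scaling factor $\rho/\|z_n\|_2\to 1$ and the boundedness of the sequences guaranteed by coercivity.
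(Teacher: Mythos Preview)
Your arguments for $I_{s^2}>-\infty$ and for the weak subadditivity \eqref{A14} are fine and coincide with what the paper does (Lemma~\ref{L 1} for the lower bound, and the standard translate-to-infinity argument for \eqref{A14}, which the paper simply cites).

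The gap is in your proof of $I_{s^2}<0$. With the mass-preserving dilation $u_t(x)=t^{3/2}u(tx)$ you get $A(u_t)=t^{2}A(u)$, $C(u_t)=t^{\frac{3(p-2)}{2}}C(u)$, and $B(u_t)=\frac{t}{4}\int\!\!\int\frac{1-e^{-|\xi-\eta|/t}}{|\xi-\eta|}u^{2}(\xi)u^{2}(\eta)\,d\xi\,d\eta$. As $t\to 0^{+}$ the integrand increases monotonically to the Coulomb kernel, so $B(u_t)/t\to \tfrac14 H(u)>0$; the bound $B(u_t)\le tH(u)$ is therefore sharp in order. You then compare $C$ only against $A$, but the relevant competition is against $B$: the negative term has exponent $\frac{3(p-2)}{2}$, which is $\ge 1$ precisely when $p\ge\frac{8}{3}$. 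Hence for $p\in[\frac{8}{3},3)$ the positive term $B(u_t)\sim ct$ dominates $|C(u_t)|\sim c't^{\,>1}$ as $t\to 0^{+}$, and your conclusion $I(u_t)<0$ fails in that range. Since this lemma is used to prove Theorem~\ref{T2} (the case $p\in(2,3)$), the subrange $[\frac{8}{3},3)$ cannot be ignored.

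The paper avoids this obstruction by \emph{not} insisting on a mass-preserving scaling. It uses $g_u(\theta)=\theta^{1-\frac32\beta}u(x/\theta^{\beta})$ with $\beta=-2$, so $\|g_u(\theta)\|_2=\theta$; the exponents become $6$ for $A$, $6$ for the $B$-bound, and $4p-6$ for $C$, and since $4p-6<6$ throughout $p\in(2,3)$ one gets $I(g_u(\theta))\to 0^{-}$ as $\theta\to 0$. This yields $I_{s^2}<0$ only for small $s$, and the paper then bootstraps: once $I_{s^2}<0$ on $(0,\theta_0]$, weak subadditivity gives $I_{s^{2}}\le I_{\theta_0^{2}}+I_{s^{2}-\theta_0^{2}}<0$ for $s\in(\theta_0,\sqrt{2}\theta_0]$, and iterating covers all $s>0$. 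Your plan can be repaired by adopting this two-step structure; the single-shot mass-preserving scaling cannot work for all $p\in(2,3)$ because of the $B$ term's scaling behaviour.
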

\begin{proof}
The detailed proof process for the establishment of condition \eqref{A14} can be proved using the same way in Proposition 2.3 in \cite{MR2032129}. From Lemma \eqref{L 1} we can get $I_{s^{2}}>-\infty.$ Hence, we just need to proof $I_{s^{2}}<0.$
Let $g_{u}(\theta)=\theta^{1-\frac{3}{2}\beta}u(\frac{x}{\theta^{\beta}}).$ From calculation, we obtained $\Theta_{g_{u}}(\theta)=\theta^{2}$ and $\left\|g_{u}(\theta)\right\|_{2}=\theta.$ By variable substitution
\begin{align*}
\begin{split}
&A(g_{u}(\theta))=\theta^{2-2\beta}A(u),\\
&C(g_{u}(\theta))=\theta^{(1-\frac{3}{2}\beta)p+3\beta}C(u),\\
&B(g_{u}(\theta))=\theta^{4-\beta}\int_{\mathbb{R}^{3}}\int_{\mathbb{R}^{3}}
\frac{1-e^{-\theta^{\beta}}|x-y|}{|x-y|}u(x)^{2}u(y)^{2}dxdy.
\end{split}
\end{align*}
Assume $\beta=-2$ we have
\begin{align*}
I(g_{u}(\theta))=\frac{\theta^{6}}{2}A(u)+\frac{\theta^{6}}{4}
\int_{\mathbb{R}^{3}}\int_{\mathbb{R}^{3}}\frac{1-e^{-\theta^{-2}|x-y|}}{|x-y|}
u(x)^{2}u(y)^{2}dxdy+\frac{\theta^{4p-6}}{p}C(u).
\end{align*}
Due to $p\in(2,3),$ we know $4p-6<6$. Hence, $I(u_{\theta})\to 0^{-}$ as $\theta\to 0.$ Then there exists a small $\theta_{0}$ such that
$$I_{s^{2}}<0\ \ \ \forall s\in(0,\theta_{0}].$$
Let $\rho\in(\theta_{0},\sqrt{2}\theta_{0}),$ for every $s\in(\theta_{0},\rho)$ from adding conditions to get
$$I_{s^{2}}\le I_{\theta_{0}^{2}}+I_{s^{2}-\theta_{0}^{2}}<0.$$
Due to $s^{2}-\theta_{0}^{2}<\theta_{0}^{2}$ we know $I_{s^{2}}<0$ for $s$ in the large interval $(0,\rho].$ Repeat the above process to get $I_{s^{2}}<0$ for every $s>0.$
\end{proof}
\begin{lemma}
The function $s\mapsto I_{s^{2}}$ is continuous.
\end{lemma}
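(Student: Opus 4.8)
The plan is to prove that $s\mapsto I_{s^{2}}$ is continuous by establishing simultaneously upper and lower semicontinuity at each fixed $s_{0}>0$. The essential tool is the pure scalar rescaling $u\mapsto tu$ with $t>0$, which multiplies the $L^{2}$-norm by $t$ and acts on the three pieces of the functional explicitly. Since $\phi_{tu}=t^{2}\phi_{u}$ by the linearity of the defining equation for $\phi_{u}$ in $|u|^{2}$, a direct computation gives
\begin{align*}
A(tu)=t^{2}A(u),\qquad B(tu)=t^{4}B(u),\qquad C(tu)=t^{p}C(u),
\end{align*}
so that
\begin{align*}
I(tu)-I(u)=(t^{2}-1)A(u)+(t^{4}-1)B(u)+(t^{p}-1)C(u).
\end{align*}
This identity is the engine of both inequalities.

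For upper semicontinuity at $s_{0}$, I would fix $\varepsilon>0$ and choose $u\in B_{s_{0}}$ with $I(u)<I_{s_{0}^{2}}+\varepsilon$. Given $s_{n}\to s_{0}$, set $t_{n}=s_{n}/s_{0}\to 1$, so that $t_{n}u\in B_{s_{n}}$ and hence $I_{s_{n}^{2}}\le I(t_{n}u)$. The displayed identity, with $A(u),B(u),C(u)$ fixed, forces $I(t_{n}u)\to I(u)$, whence $\limsup_{n}I_{s_{n}^{2}}\le I(u)<I_{s_{0}^{2}}+\varepsilon$; letting $\varepsilon\to 0$ gives $\limsup_{n}I_{s_{n}^{2}}\le I_{s_{0}^{2}}$.

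For lower semicontinuity, I would pick near-minimizers $u_{n}\in B_{s_{n}}$ with $I(u_{n})<I_{s_{n}^{2}}+\tfrac1n$ and rescale by $\tau_{n}=s_{0}/s_{n}\to 1$ so that $\tau_{n}u_{n}\in B_{s_{0}}$ and $I_{s_{0}^{2}}\le I(\tau_{n}u_{n})$. The same identity gives
\begin{align*}
I(\tau_{n}u_{n})-I(u_{n})=(\tau_{n}^{2}-1)A(u_{n})+(\tau_{n}^{4}-1)B(u_{n})+(\tau_{n}^{p}-1)C(u_{n}),
\end{align*}
which tends to $0$ provided $A(u_{n})$, $B(u_{n})$, $|C(u_{n})|$ stay bounded. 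Here I would invoke the coercivity of Lemma \ref{L 1}: since $s_{n}$ lies in a bounded neighbourhood of $s_{0}$ and the $u_{n}$ nearly minimize, they are uniformly bounded in $H^{1}(\mathbb{R}^{3})$, and then Lemma \ref{L2}(ii) bounds $B(u_{n})$ while Gagliardo--Nirenberg bounds $|C(u_{n})|$. Consequently $I_{s_{0}^{2}}\le I(\tau_{n}u_{n})=I(u_{n})+o(1)<I_{s_{n}^{2}}+o(1)$, giving $I_{s_{0}^{2}}\le\liminf_{n}I_{s_{n}^{2}}$. Combining the two statements yields continuity.

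The main obstacle is the lower-semicontinuity step, specifically securing a uniform $H^{1}$-bound on the near-minimizing family $\{u_{n}\}$ across varying radii: the coercivity estimate of Lemma \ref{L 1} must be applied with constants that remain controlled as the constraint radius $s_{n}$ ranges over a compact interval around $s_{0}$. Once that uniformity is in hand, controlling the nonlocal term $B(u_{n})$ through Lemma \ref{L2}(ii) is routine, and the argument closes.
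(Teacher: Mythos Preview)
Your proposal is correct and follows essentially the same route as the paper: rescale near-minimizers between spheres and use the coercivity of Lemma~\ref{L 1} (with constants uniform for $s_n$ in a compact neighbourhood of $s_0$) together with Lemma~\ref{L2}(ii) and Gagliardo--Nirenberg to bound $A(u_n),B(u_n),|C(u_n)|$, so that the rescaling error is $o(1)$. The only cosmetic difference is that you use the pure amplitude scaling $u\mapsto tu$ (giving the clean identity $I(tu)-I(u)=(t^{2}-1)A(u)+(t^{4}-1)B(u)+(t^{p}-1)C(u)$), whereas the paper's computation mixes in a spatial dilation; your choice is in fact the simpler one and avoids the modified kernel appearing in the paper's estimate of the $B$-term.
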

\begin{proof}
To prove this lemma just prove if $\rho_{n}\to\rho$ then $\lim_{n\to\infty}I_{\rho_{n}^{2}}=I_{\rho^{2}}.$ Let $w_{n}\in B_{\rho_{n}}$ such that $I(w_{n})<I_{\rho_{n}^{2}}+\frac{1}{n}<\frac{1}{n}$. Hence, from Gagliardo--Nirenberg inequality, Sobolev inequality and Lemma \ref{L2}, we can deduce
\begin{align*}
\begin{split}
\frac{1}{n}&>I(w_{n})\\
&\ge\frac{1}{2}\left\|\nabla w_{n}\right\|_{2}^{2}-\frac{1}{p}\left\|w_{n}\right\|_{p}^{p}\\
&\ge\frac{1}{2}\left\|\nabla w_{n}\right\|_{2}^{2}-C\rho_{n}^{\frac{6-p}{2}}\left\|\nabla w_{n}\right\|_{2}^{\frac{3(p-2)}{2}}.
\end{split}
\end{align*}
Due to $\frac{3(p-2)}{2}<2$ and $\left\{\rho_{n}\right\}$ is bounded sequence, Hence we know that $\left\{w_{n}\right\}$ is bounded in $H^{1}(\mathbb{R}^{3}).$ Moreover, $\left\{A(w_{n})\right\}$ and $\left\{C(w_{n})\right\}$ are bounded sequence. We also know
$$B(w_{n})=\int_{\mathbb{R}^{3}}\phi_{w_{n}}|w_{n}|^{2}dx\le C\left\|w_{n}\right\|^{4}.$$
So $\left\{B(w_{n})\right\}$ bounded in $H^{1}(\mathbb{R}^{3}).$ From simple calculation we can obtain
\begin{align}\label{A1}
\begin{split}
I_{\rho^{2}}&\le I(\frac{\rho}{\rho_{n}}w_{n})\\&=\frac{1}{2}A(\frac{\rho}{\rho_{n}}w_{n})+\frac{1}{4}
B(\frac{\rho}{\rho_{n}}w_{n})+\frac{1}{p}C(\frac{\rho}{\rho_{n}}w_{n})\\
&=\frac{1}{2}(\frac{\rho}{\rho_{n}})^{2}A(w_{n})+\frac{1}{p}(\frac{\rho}{\rho_{n}})^{p}
+\frac{1}{4}(\frac{\rho}{\rho_{n}})^{5}\int_{\mathbb{R}^{3}}\int_{\mathbb{R}^{3}}
\frac{1-e^{-\frac{\rho_{n}}{\rho}|x-y|}}{|x-y|}w^{2}_{n}(x)w^{2}_{n}(y)dxdy\\
&\le I(w_{n})+o(1)\\
&\le I_{\rho_{n}^{2}}+o(1).
\end{split}
\end{align}
On the other hand, given a minimizing sequence $\left\{v_{n}\right\}\subset B_{\rho}$ for $I_{\rho^{2}}$ we have
\begin{align}\label{A2}
\begin{split}
I_{\rho_{n}^{2}}&\le I(\frac{\rho_{n}}{\rho}v_{n})\\
&=I(v_{n})+o(1)\\
&=I_{\rho^{2}}+o(1).
\end{split}
\end{align}
According \eqref{A1} and \eqref{A2}, we get $\lim_{n\to\infty}I_{\rho_{n}^{2}}=I_{\rho^{2}}.$
\end{proof}

\begin{lemma}
$\lim_{s\to 0}\frac{I_{s^{2}}}{s^{2}}=0$.
\end{lemma}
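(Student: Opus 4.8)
The plan is to squeeze the ratio $\frac{I_{s^{2}}}{s^{2}}$ between $0$ and a quantity that vanishes as $s\to 0$. Since the previous lemma already establishes $I_{s^{2}}<0$ for all $s>0$, the upper estimate $\frac{I_{s^{2}}}{s^{2}}<0$ is immediate, and it remains only to produce a matching lower bound of the form $\frac{I_{s^{2}}}{s^{2}}\ge -C's^{\tau}$ with $\tau>0$.

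First I would discard the nonlocal term: because $\phi_{u}\ge 0$ by Lemma \ref{L2}(i), we have $B(u)\ge 0$, so $I(u)\ge A(u)+C(u)$ for every $u\in B_{s}$. Applying the Gagliardo--Nirenberg inequality exactly as in Lemma \ref{L 1} gives
$$I(u)\ge \frac{1}{2}\left\|\nabla u\right\|_{2}^{2}-\frac{C_{p}}{p}s^{\frac{6-p}{2}}\left\|\nabla u\right\|_{2}^{\frac{3p}{2}-3}.$$
Writing $t=\left\|\nabla u\right\|_{2}$ and $\gamma=\frac{3p}{2}-3\in(0,2)$, the right-hand side depends only on $t$, so I minimize the scalar function $g(t)=\frac{1}{2}t^{2}-\frac{C_{p}}{p}s^{\frac{6-p}{2}}t^{\gamma}$ over $t\ge 0$. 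The critical point $t_{*}$ satisfies $t_{*}^{2-\gamma}=\frac{\gamma C_{p}}{p}s^{\frac{6-p}{2}}$, and the minimal value is $g(t_{*})=\left(\frac{1}{2}-\frac{1}{\gamma}\right)t_{*}^{2}$. Substituting back, $t_{*}^{2}$ is a fixed multiple of $s^{\frac{6-p}{2-\gamma}}$, whence the uniform lower bound
$$I_{s^{2}}\ge -C's^{\theta},\qquad \theta=\frac{6-p}{2-\gamma}=\frac{2(6-p)}{10-3p}.$$

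The crux is then the exponent comparison $\theta>2$. A direct computation yields $\theta-2=\frac{4(p-2)}{10-3p}$, which is strictly positive for every $p\in(2,\frac{10}{3})$. Combining with the upper estimate gives
$$-C's^{\theta-2}\le \frac{I_{s^{2}}}{s^{2}}<0,$$
and letting $s\to 0$ forces $\frac{I_{s^{2}}}{s^{2}}\to 0$ by the squeeze theorem.

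I expect no serious obstacle here. The only points requiring care are confirming that dropping $B(u)$ is harmless for a lower bound (it is, since $B\ge 0$) and verifying the clean arithmetic inequality $\theta>2$. The computation also makes transparent why the restriction $p<\frac{10}{3}$ is exactly what is needed: at the critical exponent $p=\frac{10}{3}$ one has $\theta=2$, the lower bound degenerates to a constant multiple of $s^{2}$, and the ratio need no longer vanish.
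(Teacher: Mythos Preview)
Your proof is correct and follows the same strategy as the paper: drop the nonnegative nonlocal term $B(u)$ to reduce to the auxiliary functional $G_{s^{2}}:=\inf_{B_{s}}\{A(u)+C(u)\}$, so that $G_{s^{2}}/s^{2}\le I_{s^{2}}/s^{2}<0$, and then show $G_{s^{2}}/s^{2}\to 0$. The only difference is that the paper cites the limit $G_{s^{2}}/s^{2}\to 0$ from an external reference (Appendix A of \cite{MR2826402}), whereas you carry out the Gagliardo--Nirenberg minimization explicitly and obtain the sharp exponent $\theta-2=\frac{4(p-2)}{10-3p}$; your version is therefore more self-contained but not a different argument.
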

\begin{proof}
We let $G_{\rho^{2}}=\inf\left\{\frac{1}{2}\left\|u\right\|_{D^{1,2}}^{2}-
\frac{1}{p}\int_{\mathbb{R}^{3}}|u|^{p}dx\right\}.$ We know that $G_{\rho^{2}}/\rho^{2}\le I_{\rho^{2}}/\rho^{2}<0.$ Since $G_{\rho^{2}}/\rho^{2}\to 0$ (see Appendix A in \cite{MR2826402}) we can easily conclude.
\end{proof}

\begin{lemma}
For small $\rho$ the function $I$ satisfies (v) of proposition \ref{P4}.
\end{lemma}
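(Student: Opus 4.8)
The plan is to verify the scaling‑path condition (v) of Lemma \ref{P2}, namely that every $u \in M(\rho)$ admits an admissible scaling path $g_u$ with $\frac{d}{d\theta}h_{g_u}(\theta)\big|_{\theta=1} \neq 0$, using the elementary path obtained by dilating the amplitude. Fix a minimizer $u \in M(\rho)$, say $u \in B_\mu$ with $I(u) = I_{\mu^2}$ and $0 < \mu \le \rho$, and set $g_u(\theta) = \theta u$. Since $\|g_u(\theta)\|_2^2 = \theta^2\|u\|_2^2$, we have $\Theta_{g_u}(\theta) = \theta^2$, which is differentiable with $\Theta'_{g_u}(1) = 2 \neq 0$, so $g_u \in G_u$. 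Using the homogeneities $A(\theta u) = \theta^2 A(u)$, $B(\theta u) = \theta^4 B(u)$ and $C(\theta u) = \theta^p C(u)$, one computes $h_{g_u}(\theta) = (\theta^4 - \theta^2)B(u) + (\theta^p - \theta^2)C(u)$, which is smooth for $\theta \ge 0$; hence $g_u$ is admissible. Differentiating gives $\frac{d}{d\theta}h_{g_u}(\theta)\big|_{\theta=1} = 2B(u) + (p-2)C(u)$. Because $C(u) < 0$, it therefore suffices to show that for all sufficiently small $\rho$ one has $2B(u) < (p-2)|C(u)|$ for every such minimizer, which makes the derivative strictly negative, in particular nonzero.

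The core of the argument is the uniform estimate $B(u)/|C(u)| \to 0$ as $\mu \to 0$, valid for every minimizer at mass $\mu$. First I would bound $|C(u)|$ from below: a direct test‑function computation, rescaling a fixed bump $v_\sigma(x) = \mu\sigma^{3/2}\psi(\sigma x)$ and optimizing in $\sigma$, noting that by Lemma \ref{L2} the nonlocal term $B(v_\sigma)$ is of strictly higher order in $\mu$, yields $I_{\mu^2} \le -c_0\mu^{\frac{12-2p}{10-3p}}$ for small $\mu$; since $|C(u)| = A(u) + B(u) + |I_{\mu^2}| \ge |I_{\mu^2}|$, this gives $|C(u)| \ge c_0\mu^{\frac{12-2p}{10-3p}}$. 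Next, from $I(u) \le 0$ together with the Gagliardo--Nirenberg inequality $\|u\|_p^p \le C_p\mu^{\frac{6-p}{2}}\|\nabla u\|_2^{\frac{3p-6}{2}}$ and $\frac{3p-6}{2} < 2$ (as in Lemma \ref{L 1}), one obtains the matching gradient bound $\|\nabla u\|_2 \le C\mu^{\frac{6-p}{10-3p}}$. Finally, Lemma \ref{L2} and Gagliardo--Nirenberg give $B(u) \le C\|u\|_{12/5}^4 \le C\mu^3\|\nabla u\|_2$.

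Combining these three estimates, $B(u)/|C(u)| \le C'\mu^{\,3 + \frac{6-p}{10-3p} - \frac{12-2p}{10-3p}} = C'\mu^{\frac{8(3-p)}{10-3p}}$, whose exponent is strictly positive for $p \in (2,3)$; hence the ratio tends to $0$ as $\mu \to 0$. Since this bound is increasing in $\mu$, choosing $\rho_2 > 0$ so small that $C'\rho_2^{\frac{8(3-p)}{10-3p}} < \frac{p-2}{2}$ forces $2B(u) < (p-2)|C(u)|$ simultaneously for all $\mu \in (0,\rho_2]$ and all minimizers at mass $\mu$, which completes the verification of (v).

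I expect the main obstacle to be making the a priori estimates genuinely uniform over the whole set $M(\rho)$: the quantities above must be controlled for every minimizer at every mass $\mu \in (0,\rho]$, not merely for the model profile. The delicate point is the lower bound on $|C(u)|$, which rests on a two‑sided control $|I_{\mu^2}| \sim \mu^{\frac{12-2p}{10-3p}}$; the matching upper bound $I_{\mu^2} \ge -C_0\mu^{\frac{12-2p}{10-3p}}$ follows again from Gagliardo--Nirenberg, but one must check that the subcritical exponent $\frac{3p-6}{2} < 2$ really renders the nonlocal contribution subordinate, so that the balance producing the exponent $\frac{8(3-p)}{10-3p}$ is not spoiled. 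A minor reassuring point is that, working in $H^1(\mathbb{R}^3,\mathbb{C})$, one may replace $u$ by $|u|$ via the diamagnetic inequality without increasing $I$, so all estimates may be carried out for nonnegative functions if desired.
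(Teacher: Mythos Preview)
Your argument is correct and is genuinely different from the paper's route. Both proofs ultimately hinge on the quantity $h'_{g_u}(1)=2B(u)+(p-2)C(u)$ attached to the amplitude path $g_u(\theta)=\theta u$ (this is the $\beta=0$ member of the paper's one–parameter family $g_u(\theta)=\theta^{1-\frac32\beta}u(\cdot/\theta^{\beta})$). The paper argues by contradiction: it assumes $h'_{g_u}(1)=0$ for \emph{every} $\beta$ along a sequence $u_n\in M(\rho)$ with $\rho_n\to 0$, couples this with the Poho\v{z}aev identity \eqref{A5} to obtain the rigid relations \eqref{A7}, and then extracts a contradiction via a five–case analysis on $p\in(2,3)$ using various interpolation inequalities. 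You instead prove directly, for the single path $\beta=0$, that $2B(u)<(p-2)|C(u)|$ for \emph{all} minimizers at any mass $\mu\le\rho$ once $\rho$ is small, by combining the test–function upper bound $I_{\mu^2}\le -c_0\mu^{\frac{12-2p}{10-3p}}$ (hence $|C(u)|\ge c_0\mu^{\frac{12-2p}{10-3p}}$), the gradient bound $\|\nabla u\|_2\le C\mu^{\frac{6-p}{10-3p}}$ coming from $I(u)\le 0$ and Gagliardo--Nirenberg, and $B(u)\le C\|u\|_{12/5}^4\le C\mu^3\|\nabla u\|_2$. The resulting ratio $B(u)/|C(u)|\le C'\mu^{\frac{8(3-p)}{10-3p}}\to 0$ handles the whole range $p\in(2,3)$ at once. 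Your approach is shorter and entirely avoids the Poho\v{z}aev identity and the case split; the paper's approach, on the other hand, follows the abstract scheme of \cite{MR2826402} and yields the extra structural relations \eqref{A7}, which are informative but not strictly needed here. One minor remark: your final paragraph worries about a two–sided control of $|I_{\mu^2}|$, but in fact only the test–function upper bound on $I_{\mu^2}$ is used, so that concern can be dropped.
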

\begin{proof}
From Theorem 2.1 of \cite{MR2826402}, we know that under the conditional assumptions of proposition \ref{P2}  we can get $M(\rho)$ is nonempty, where $M(\rho)$ is defined in Lemma \ref{P2}. Moreover, since $0\notin M(\rho),$ $A(u),\ B(u)$ and $C(u)$ are different from zero whenever $u\in M(\rho).$
As we all know that the solution of problem \eqref{P1} must satisfied the following Poho\v{z}ev equality which was proved in \cite{MR4129340},
\begin{align}\label{A3}
\begin{split}
0=A(u)
+5B(u)+3C(u)+\frac{1}{4}\int_{\mathbb{R}^{3}}\int_{\mathbb{R}^{3}}
e^{-|x-y|}u^{2}(x)u^{2}(y)dxdy+\frac{3\omega}{2}\int_{\mathbb{R}^{3}}u^{2}dx.
\end{split}
\end{align}
Moreover, when without the $L^{2}$-norm constraint, the energy functional of problem \ref{P1} is
\begin{align*}
J(u)=A(u)+B(u)+C(u)+\frac{\omega}{2}\int_{\mathbb{R}^{2}}u^{2}dx.
\end{align*}
If $u$ is the solution of problem \ref{P1} then $\left\langle J'(u),u\right\rangle =0$ namely,
\begin{align}\label{A4}
2A(u)+4B(u)+pC(u)+\omega\int_{\mathbb{R}^{3}}|u|^{2}dx=0.
\end{align}
From \eqref{A3} and \eqref{A4}, we claim that for any $u\in M(\rho)$ we get
\begin{align}\label{A5}
\begin{split}
0=&2A(u)+B(u)-\frac{6-3p}{2}C(u)
-\frac{1}{4}\int_{\mathbb{R}^{3}}\int_{\mathbb{R}^{3}}e^{-|x-y|}u^{2}(x)u^{2}(y)dxdy.
\end{split}
\end{align}
Now, for $u\neq 0$ we compute explicitly $h_{g_{u}}(\theta)$ by choosing the family of scaling paths of $u$ with $\beta\in\mathbb{R}$ given  by
$$\textrm{G}_{u}^{\beta}=\left\{g_{u}(\theta)=\theta^{1-\frac{3}{2}\beta}
u(x/\theta^{\beta})\right\}.$$
All the paths of this family have as associated function $\Theta(\theta)=\theta^{2}.$ We get
\begin{align}
\begin{split}
h_{g_{u}}(\theta)&=I(g_{u}(\theta))-\theta^{2}I(u)\\
&=(\theta^{2-2\beta}-\theta^{2})A(u)+
(\theta^{(1-\frac{3}{2}\beta)p+3\beta}-\theta^{2})C(u)-\theta^{2}B(u)\\
&\ \ \ +\frac{1}{4}\theta^{4-\beta}
\int_{\mathbb{R}^{3}}\int_{\mathbb{R}^{3}}\frac{1-e^{-\theta^{\beta}|x-y|}}
{|x-y|}u(x)^{2}u(y)^{2}dxdy,
\end{split}
\end{align}
which show that $h_{g_{u}}$ is differentiable for every $g_{u}\in\textrm{G}_{u}^{\beta}.$ We have also, for $g_{u}\in\textrm{G}_{u}^{\beta}:$
\begin{align}
\begin{split}
h'_{g_{u}}(\theta)=&\left((2-2\beta)\theta^{2-2\beta-1}-2\theta\right)A(u)-2\theta B(u)+\left([(1-\frac{3}{2}\beta)p+3\beta]
\theta^{(1-\frac{3}{2}\beta)p+3\beta-1}-
2\theta\right)C(u)\\
&+\frac{1}{4}(4-\beta)\theta^{4-\beta-1}
\int_{\mathbb{R}^{3}}\int_{\mathbb{R}^{3}}
\frac{1-e^{-\theta^{\beta}|x-y|}}{|x-y|}u^{2}(x)u^{2}(y)dxdy\\
&+\frac{1}{4}\theta^{4-\beta}\int_{\mathbb{R}^{3}}\int_{\mathbb{R}^{3}}
e^{-\theta^{\beta}|x-y|}\beta\theta^{\beta-1}u^{2}(x)u^{2}(y)dxdy.
\end{split}
\end{align}
Let $\theta=1$, we get
\begin{align}
\begin{split}
h'_{g_{u}}(1)=&-2\beta A(u)+\left((1-\frac{3}{2}\beta)p+3\beta-2\right)C(u)+(2-\beta)B(u)\\
&+\frac{\beta}{4}\int_{\mathbb{R}^{3}}\int_{\mathbb{R}^{3}}e^{-|x-y|}
u^{2}(x)u^{2}(y)dxdy.
\end{split}
\end{align}
Assume there exists a sequence $\left\{u_{n}\right\}\subset M(\rho)$ with $\rho\ge\left\|u_{n}\right\|_{2}=\rho_{n}\to 0$ such that for all $\beta\in\mathbb{R}.$
\begin{align}\label{A6}
\begin{split}
0&=h'_{g_{u_{n}}}(1)\\&=-2\beta A(u_{n})+\left((1-\frac{3}{2}\beta)p+3\beta-2\right)C(u_{n})+(2-\beta)B(u_{n})\\
&\ \ \ +\frac{\beta}{4}\int_{\mathbb{R}^{3}}\int_{\mathbb{R}^{3}}e^{-|x-y|}
u_{n}^{2}(x)u_{n}^{2}(y)dxdy.
\end{split}
\end{align}
From \eqref{A5} and \eqref{A6}, we can get
\begin{align}\label{A15}
2B(u_{n})+(p-2)C(u_{n})=0.
\end{align}
and hence
\begin{align}\label{A7}
\begin{split}
&B(u_{n})=A(u_{n})-\frac{1}{2}\int_{\mathbb{R}^{3}}\int_{\mathbb{R}^{3}}
e^{-|x-y|}u^{2}_{n}(x)u^{2}_{n}(y)dxdy,\\
&C(u_{n})=\frac{2}{2-p}A(u_{n})-\frac{1}{2-p}\int_{\mathbb{R}^{3}}
\int_{\mathbb{R}^{3}}e^{-|x-y|}u^{2}_{n}(x)u^{2}_{n}(y)dxdy,\\
&I(u_{n})=\frac{6-2p}{2-p}A(u_{n})-\frac{4-p}{2(2-p)}\int_{\mathbb{R}^{3}}
\int_{\mathbb{R}^{3}}e^{-|x-y|}u^{2}_{n}(x)u^{2}_{n}(y)dxdy.
\end{split}
\end{align}
Through direct calculation, we can know that $e^{-|x-y|}$ is a bounded function, then $$\int_{\mathbb{R}^{3}}\int_{\mathbb{R}^{3}}e^{-|x-y|}u^{2}_{n}(x)u^{2}_{n}(y)dxdy\to 0,$$ so
\begin{eqnarray}\label{A8}
	\left\{   \begin{array}{ll}
	I(u_{n})=I_{\rho_{n}^{2}}\to 0, \\
	A(u_{n}),B(u_{n}),C(u_{n})\to 0.\\
\end{array}
\right.	
\end{eqnarray}
Notice the following Hardy-Littlewood-Sobolev inequality
$$B(u_{n})\le C\left\|u_{n}\right\|_{\frac{12}{5}}^{4},$$
that we will frequently use.

\textbf{Case} \textbf{1}: When $2<p<\frac{12}{5}.$

Then
$$B(u_{n})\le C\left\|u_{n}\right\|_{\frac{12}{5}}^{4}\le C\left\|u_{n}\right\|_{p}^{4\alpha}\left\|u_{n}\right\|_{6}^{4(1-\alpha)},\ \ \ \alpha=\frac{3p}{2(6-p)}.$$
Thanks to \eqref{A7} and Sobolev inequality $\left\|u_{n}\right\|_{6}^{2}\le 2SA(u_{n}),$ where $S$ is the best constant for Sobolev embedding $D^{1,2}(\mathbb{R}^{3}) \hookrightarrow L^{6}(\mathbb{R}^{3})$, namely $S:=\inf_{u\in D^{1,2}(\mathbb{R}^{3})\setminus\left\{0\right\}}\dfrac{\int_{\mathbb{R}^{3}}\left| \nabla u\right| ^{2}dx}{\left(\int_{\mathbb{R}^{3}} |u|^{6}dx\right)^{\frac{1}{3}}}.$ We get
$$B(u_{n})\le CB(u_{n})^{\frac{4\alpha}{p}}\left(B(u_{n})+\frac{1}{2}
\int_{\mathbb{R}^{3}}\int_{\mathbb{R}^{3}}e^{-|x-y|}u^{2}_{n}(x)u^{2}_{n}(y)dxdy\right)
^{\frac{4(1-\alpha)}{2}}.$$
Due to $\frac{4\alpha}{p}+\frac{4(1-\alpha)}{2}=3-\frac{2p}{6-p}>1$ for $2<p<\frac{12}{5},$ hence this is a contradiction with \eqref{A8}.

\textbf{Case} \textbf{2}: When $p=\frac{12}{5}.$

Due to \eqref{A15} we obtain
$$\left\|u_{n}\right\|_{\frac{12}{5}}^{{\frac{12}{5}}}=CB(u_{n})\le C\left\|u_{n}\right\|_{\frac{12}{5}}^{4},$$
which contradicts \eqref{A8}.

\textbf{Case} \textbf{3}: When $\frac{12}{5}<p<\frac{8}{3}.$

Interpolating $L^{\frac{12}{5}}$ between $L^{2}$ and $L^{p}$ we get
$$\left\|u_{n}\right\|_{p}^{p}=CB(u_{n})\le C\left\|u_{n}\right\|_{\frac{12}{5}}^{4}\le C\left\|u_{n}\right\|_{2}^{4\alpha}
\left\|u_{n}\right\|_{p}^{4(1-\alpha)},\ \ \ \ \alpha=\frac{5p-12}{6(p-2)}.$$
Since $p<4(1-\alpha)$, i.e. $p<\frac{8}{3},$ we get a contradiction with \eqref{A8}.

\textbf{Case} \textbf{4}: When $p=\frac{8}{3}.$

Again by interpolation we get
$$B(u_{n})\le C\left\|u_{n}\right\|_{\frac{12}{5}}^{4}\le C\rho_{n}^{\frac{4}{3}}\left\|u_{n}\right\|_{\frac{8}{3}}^{\frac{8}{3}},$$
and again, using that $B(u_{n})=C\left\|u_{n}\right\|_{\frac{8}{3}}^{\frac{8}{3}}$ we get a contradiction.

\textbf{Case} \textbf{5}: When $\frac{8}{3}<p<3.$

We first compute
\begin{align}
\begin{split}
\frac{I(g_{u}(\theta))}{\theta^{2}\left\|u\right\|_{2}^{2}}&=\frac{h_{g_{u}}(\theta)}
{\theta^{2}\left\|u\right\|_{2}^{2}}+\frac{I(u)}{\left\|u\right\|_{2}^{2}}\\
&=\frac{1}{\left\|u\right\|_{2}^{2}}\bigg\{\theta^{-2\beta}A(u)
+\frac{1}{4}\theta^{2-\beta}\int_{\mathbb{R}^{3}}\int_{\mathbb{R}^{3}}
\frac{1-e^{-\theta^{\beta}|x-y|}}{|x-y|}u^{2}(x)u^{2}(y)dxdy\\
&\ \ \ +
\theta^{(1-\frac{3}{2}\beta)p+3\beta-2}C(u)\bigg\}.
\end{split}
\end{align}
In this case for $u_{0}$ satisfying \eqref{A7} with $\left\|u_{0}\right\|_{2}=\rho_{0},$ hence we can get
\begin{align}
\begin{split}
\frac{I_{\theta^{2}\rho^{2}}}{\theta^{2}\rho^{2}}&\le \frac{I(g_{u}(\theta))}{\theta^{2}\rho^{2}}\\
&\le\frac{1}{\rho_{0}^{2}}\left(\theta^{-2\beta}A(u_{0})+\frac{2}{2-p}
\theta^{(1-\frac{3}{2}\beta)p+3\beta-2}A(u_{0})+\frac{1}{4}\theta^{2-\beta}
\int_{\mathbb{R}^{3}}\int_{\mathbb{R}^{3}}\frac{1}{|x-y|}u(x)^{2}u(y)^{2}dxdy\right)\\
&\le\frac{1}{\rho_{0}^{2}}\left(
\theta^{-2\beta}A(u_{0})+\theta^{2-\beta}A(u_{0})+\frac{2}{2-p}
\theta^{(1-\frac{3}{2}\beta)p+3\beta-2}A(u_{0})\right).
\end{split}
\end{align}
Now let us choose $\beta=\frac{2(2-p)}{10-3p}$ so that $0<-2\beta=(1-\frac{3}{2}\beta)p+\beta-2<2-\beta.$ Hence we can obtain
\begin{align}
\begin{split}
\frac{I_{\theta^{2}\rho_{0}^{2}}}{\theta^{2}\rho_{0}^{2}}
&\le\frac{I(g_{u_{0}}(\theta))}{\theta^{2}\rho_{0}^{2}}=\frac{A(u_{0})}{\rho_{0}^{2}}
\left[\frac{4-p}{(2-p)}\theta^{\frac{4(p-2)}{10-3p}}
+\theta^{\frac{4(4-p)}{10-3p}}\right]\\
&=\frac{\frac{4-p}{8(3-p)}
\int_{\mathbb{R}^{3}}\int_{\mathbb{R}^{3}}e^{-|x-y|}
u_{0}(x)^{2}u_{0}(y)^{2}dxdy}{\rho_{0}^{2}}\left[\frac{4-p}{2-p}\theta^{\frac{4(p-2)}
{10-3p}}+\theta^\frac{4(4-p)}{10-3p}\right]\\&
\ \ \ +\frac{\frac{2-p}{3-p}I(u_{0})}{\rho_{0}^{2}}\left[\frac{4-p}{2-p}\theta^{\frac{4(p-2)}
{10-3p}}+\theta^\frac{4(4-p)}{10-3p}\right].
\end{split}
\end{align}
Due to $\int_{\mathbb{R}^{3}}\int_{\mathbb{R}^{3}}e^{-|x-y|}u_{0}(x)^{2}u_{0}(y)^{2}dxdy\to 0$ as $s$ is enough small and $\theta^{2}\rho_{0}^{2}=s^{2},$ we get
\begin{align}\label{A11}
\frac{I_{s^{2}}}{s^{2}}\le -cs^{\frac{4(p-2)}{10-3p}}+o(s^{\frac{4(p-2)}{10-3p}}).
\end{align}
On the other hand for $u_{n}$ satisfying \eqref{A7}
$$\left\|u_{n}\right\|_{p}^{p}=CB(u_{n})\le C\left\|u_{n}\right\|_{\frac{12}{5}}^{4}
\le C\left\|u_{n}\right\|_{2}^{4\alpha}\left\|u_{n}\right\|_{p)}^{4(1-\alpha)},\ \ \ \ \alpha=\frac{5p-12}{6(p-2)},$$
that is
\begin{align}\label{A9}
\left\|u_{n}\right\|_{p}^{p}\le C\rho_{n}^{4\alpha}\left\|u_{n}\right\|_{p}^{4(1-\alpha)}.
\end{align}
we know that $\frac{8}{3}<p$ from \eqref{A9} we get $\left\|u_{n}\right\|_{p}^{p}\le C\rho_{n}^{\frac{4(p-2)}{3p-8}}.$ Combine with \eqref{A7} we obtain
\begin{align}\label{A10}
\frac{I_{\rho_{n}^{2}}}{\rho_{n}^{2}}\ge -C\rho_{n}^{\frac{4(p-2)}{3p-8}}.
\end{align}
Combine \eqref{A11} with \eqref{A10} we obtain
$$-C\rho_{n}^{\frac{4(p-2)}{3p-8}}\le \frac{I_{\rho_{n}^{2}}}{\rho_{n}^{2}}\le
-cs^{\frac{4(p-2)}{10-3p}}+o(s^{\frac{4(p-2)}{10-3p}}).$$
Since $\frac{4(p-2)}{3p-8}>\frac{4(p-2)}{10-3p},$ for $\rho_{n}\to 0$ we get a contradiction.
\end{proof}
\section{The orbital stability}
Before proving Theorem \ref{T3}, the definition of orbital stability is introduced. Let
$$S_{\rho}=\left\{e^{i\theta}u(x):\theta\in[0,2\pi),\left\|u\right\|_{2}=\rho,I(u)=I_{\rho^{2}}\right\}.$$
We call $S_{\rho}$ is orbitally stable if for every $\varepsilon>0$ there exists $\delta>0$ such that for any $\psi_{0}\in H^{1}(\mathbb{R}^{3})$ with $\inf_{v\in S_{\rho}}\left\|v-\psi_{0}\right\|_{H^{1}(\mathbb{R}^{3};\mathbb{C})}<\delta$ we have $$\forall t>0\ \ \ \ \inf_{v\in S_{\rho}}\left\|\psi(t,\cdot)-v\right\|_{H^{1}(\mathbb{R}^{3};\mathbb{C})}<\varepsilon,$$
where $\psi(t,\cdot)$ is the solution of problem \eqref{P1} with initial datum $\psi_{0}.$ It is worth noting that $S_{\rho}$ is translational invariant, that is, when $\upsilon\in S_{\rho}$ then $\upsilon(\cdot -y)\in S_{\rho}$ for every $y\in\mathbb{R}^{3}.$
\begin{proof}[\textbf{Proof of Theorem 1.3}]
Using the method of contradiction, it is assumed that there exists a $\rho>0$ such that $S_{\rho}$ is not orbitally stable. Therefore, there exist $\varepsilon>0$ and a sequence of initial data $\left\{\psi_{n},0\right\}\subset H^{1}(\mathbb{R}^{3})$ and $\left\{t_{n}\right\}\subset\mathbb{R},$ such that the maximal solution $\psi_{n}$, which is global and $\psi_{n}(0,\cdot)=\psi_{n,0},$ satisfies
$$\lim_{n\to +\infty}\inf_{v\in S_{\rho}}\left\|\psi_{n,0}-v\right\|_{H^{1}(\mathbb{R}^{3})}=0,$$
and
$$\inf_{v\in S_{\rho}}\left\|\psi_{n}(t_{n},\cdot)-v\right\|_{H^{1}(\mathbb{R}^{3})}\ge\varepsilon.$$
Therefore, there exists $u_{\rho}\in H^{1}(\mathbb{R}^{3})$ which is a minimizer of $I_{\rho^{2}}$ and $\theta\in\mathbb{R}$ such that $v=e^{i\theta}u_{\rho}$ and
$$\left\|\psi_{n,0}\right\|_{2}\to\left\|v\right\|_{2}=\rho\ \ \ \ \ I(\psi_{n,0})\to I(v)=I_{\rho^{2}}.$$
On the other hand, we can assume that $\psi_{n,0}\in B_{\rho}.$ Then $\left\{\psi_{n,0}\right\}$ is a minimizing sequence for $I_{\rho^{2}},$ and due to
$$I(\psi_{n}(\cdot,t_{n}))=I(\psi_{n,0}).$$
So $\left\{\psi_{n}(\cdot,t_{n})\right\}$ is a minimizing sequence for $I_{\rho^{2}}.$ Since we have that every minimizing sequence has a subsequence converging in $H^{1}(\mathbb{R}^{3})$ to a minimum on the sphere $B_{\rho},$ we have a contradiction.
\end{proof}

\end{document}